\def\pr{\mathbb P}
\def\esp{\mathbb E}
\newtheorem{theo}{Theorem}
\newtheorem{lem}[theo]{Lemma}
\newtheorem{coro}[theo]{Corollary}
\newtheorem{prop}[theo]{Proposition}
\newtheorem{defi}{Definition}
\newtheorem{hypo}{Assumption}
\theoremstyle{remark}
\newtheorem{xmpl}{Example}
\begin{document}

\title{Limit conditional distributions for bivariate vectors\\
    with polar representation}

\author{Anne-Laure Foug\`eres\thanks{Universit\'e de Lyon,  Universit\'e Lyon 1,  CNRS, UMR5208, Institut Camille Jordan, 43 blvd du 11 novembre 1918, F-69622 Villeurbanne-Cedex, France} \and
  Philippe Soulier\thanks{Universit\'e Paris Ouest-Nanterre, Equipe Modal'X, 200 av de la r\'epublique, F-92000 Nanterre, France } }

\maketitle

\begin{abstract}
  We investigate conditions for the existence of the limiting
  conditional distribution of a bivariate random vector when one
  component becomes large. We revisit the existing literature on the
  topic, and present some new sufficient conditions.  We concentrate
  on the case where the conditioning variable belongs to the maximum
  domain of attraction of the Gumbel law, and we study geometric
  conditions on the joint distribution of the vector.  We show that
  these conditions are of a local nature and imply asymptotic
  independence when both variables belong to the domain of attraction
  of an extreme value distribution.  The new model we introduce can
  also be useful to simulate bivariate random vectors with a given limiting conditional  distribution.

\end{abstract}

\noindent {\bf Keywords:} 
Conditional excess probability; conditional extreme-value mo\-del;
$\Gamma$-varying tail; asymptotic independence; elliptic
distributions; second order correction.

\section{Introduction}

In many practical situations, there is a need of modeling multivariate
extreme events. Extreme means, roughly speaking, that no observations
are available in the domain of interest, and that extrapolations are
needed. Multivariate extreme value theory provides an efficient
mathematical framework to deal with these problems in the situation
where the largest values of the variables of interest tend to occur
simultaneously. This situation is referred to as {\em asymptotic
  dependence} in extreme value theory. In this case, probabilities of
simultaneous large values of the components of the vector can be
approximated and estimated by means of the multivariate extreme value
distributions.  In the opposite case of {\em asymptotic independence},
the approximate probability for two or more components being
simultaneously large given by the standard theory is zero.
Refinements of the standard theory are thus needed.

One refinement is the concept of hidden regular variation introduced by
\cite{resnick:2002}. Another approach is studying,
if it exists, the limiting distribution of a random vector
conditionally on one component being large. See  \cite{heffernan:resnick:2007}. Formally stated in the
bivariate case, this corresponds to assuming that there exist
functions $m$, $a$ and $\psi$, and a bivariate distribution function
(cdf) $K$ on $[0,\infty)\times (-\infty,\infty)$ with non degenerate margins
 such that
\begin{gather}
  \lim_{t\to\infty} \pr(X \leq t + \psi(t) x \; ; Y \leq m(t) + a(t) y
  \mid X > t) = K(x,y) \; , \label{eq:loi-limite}
\end{gather}
at all points of continuity of $K$.
\cite{das:resnick:2008} introduced the terminology of
conditional extreme-value (CEV) model. Note that Condition (\ref{eq:loi-limite}) implies that $X$ belongs to a max-domain of attraction.
More properties can be found in Section \ref{sec:CEV-EV}, where in particular the relationship between CEV models and usual multivariate extreme value (EV) models is  explicited. 
Statistical applications of the conditional model on various domains
are discussed in several papers: see \cite{heffernan:tawn:2004} for a
study on air quality monitoring,
\cite{abdous:fougeres:ghoudi:soulier:2008} and
\cite{fougeres:soulier:2008} for an insight into financial contagion,
and \cite{das:resnick:2009} for an application on Internet traffic
modeling.

An important issue that must be addressed is to study models under
which Condition~(\ref{eq:loi-limite}) holds.  The aim of this
contribution is to review existing models and exhibit new ones
satisfying~(\ref{eq:loi-limite}). We restrict our attention to bivariate random vectors for
simplicity of exposition. We focus on the case where the conditioning
variable belongs to the domain of attraction of the Gumbel
distribution. 
The reason for that is that, as mentioned later, this situation is, in the models we
consider, strongly related to the asymptotic independence, which is
precisely the case where there is an advantage to work with CEV models
instead of EV models (cf.  Section \ref{sec:CEV-EV}). Our work is an attempt to motivate
the use of CEV models by exhibiting a class of bivariate models that satisfy Condition (\ref{eq:loi-limite}).

In Section~\ref{sec:modeles}, we review the existing literature. 
In Section~\ref{sec:radial} we study new models for bivariate vectors
$(X,Y)$ with a polar representation $R(u(T),v(T))$ where $R$ is a
nonnegative random variable in the domain of attraction of the Gumbel
law, independent of the random variable $T$, and the functions $u$ and
$v$ are a parametrization of a certain curve.  This model includes
many models already studied in the literature, and in particular the
bivariate elliptical distributions. Our main result
(Theorem~\ref{theo:r-t} in Section~\ref{sec:radial}) shows that the
local geometric nature of this curve around the maximum of $u$
determines the existence and form of the limiting distribution
in~(\ref{eq:loi-limite}).  
Our result covers  situations that are more general than the results of  \cite{balkema:embrechts:2007}.
In particular, as a consequence of their local
nature, our assumptions do not imply that the conditioned variable
($Y$) belongs to the domain of attraction of a univariate extreme
value distribution. Thus these polar distributions may not be imbedded
in a standard multivariate extreme value model. But when they are, we
show that they are asymptotically independent. In order to prove this,
we extend \cite[Proposition~4.1]{das:resnick:2008}. Finally, following
\cite{abdous:fougeres:ghoudi:soulier:2008} we also study a second
order correction for the asymptotic
approximation~(\ref{eq:loi-limite}). Section~\ref{sec:preuve} contains the proof of 
Theorem~\ref{theo:r-t}. Some additional auxiliary results are given and proved in Section~\ref{sec:lemmes}.

\section{Elliptical and asymptotically elliptical distributions}
\label{sec:modeles}

Early results providing families of distributions that satisfy (\ref{eq:loi-limite})
 were obtained by
\cite{eddy:gale:1981} for spherical distributions and by
\cite{berman:1983} for bivariate elliptical distributions.  Multivariate
elliptical distributions and related distributions were investigated by
\cite{hashorva:2006,hashorva:kotz:kume:2007}.  One essential feature
of elliptical distributions is that the level sets of their density
are ellipses in the bivariate case, or ellipsoids in general. Such
geometric considerations have been deeply investigated and generalized
in many directions by \cite{barbe:2003} and
\cite{balkema:embrechts:2007}. It must be noted that these geometric
properties will be ruined by transformation of the marginal
distributions to prescribed ones. 
Another specific feature of the elliptical and related models is
that the property of asymptotic dependence or independence is related
to the nature of the marginal distributions.  If they are regularly
varying, then the components are asymptotically dependent; if the
marginal distributions belong to the maximum domain of attraction of
the Gumbel distribution, then the components are asymptotically
independent.

We start by recalling some definitions that will be used throughout
the paper. A nondecreasing function $g$ is said to {belong to the
  class $\Gamma$ or to be $\Gamma$-varying} \cite[Definition
0.47]{resnick:1987}, if there exists a positive function $\psi$ such
that
\begin{gather*}
  \lim_{x\to x_1} \frac{g(x+\psi(x) t)}{g(x)} = \mathrm{e}^{t} \; .
\end{gather*}
It is well known (cf. \citet[Theorem 1.2.5]{dehaan:ferreira:2006}) that
a random variable $X$, with cdf $F$ and upper limit of the support $x_1$, is in the max-domain of attraction
of the Gumbel distribution if and only if $1/(1-F)$ {is
  $\Gamma$-varying}, i.e.
\begin{gather}
  \lim_{x\to x_1} \frac{1-F(x+\psi(x) t)}{1-F(x)} = \mathrm{e}^{-t} \; .
  \label{eq:rapvar}  
\end{gather}
The function $\psi$ is called an {\em auxiliary function}. It is
defined up to asymptotic equivalence and necessarily satisfies
$\psi(x) = o(x)$ if $x_1 = \infty$ and $\psi(x) = o(x_1-x)$ if
$x_1<\infty$.  In the sequel, for notational simplicity, we only
consider the case $x_1 = \infty$. The modifications to be made in the
case $x_1<\infty$ are straightforward, and the main change is that the
rates of convergence must be expressed in terms of $\psi(x)$ instead
of $\psi(x)/x$.  The function $\psi$ is self-neglecting (or Beurling slowly varying, cf.
\citet[Section~2.11]{bingham:goldie:teugels:1989}), i.e.  for all
$t>0$,
\begin{gather*}
  \lim_{x\to\infty} \frac{\psi(x+\psi(x)t)}{\psi(x)} = 1 \; .
\end{gather*}

Two random variables $X$ and $Y$ such that $(X,Y)$ belongs to the
maximum domain of attraction of a bivariate extreme value distribution
$G$ are said to be {\em asymptotically independent} if $G$ has
independent marginals.  (See e.g.
\cite[Section~6.2]{dehaan:ferreira:2006}).

In the following two subsections,  we recall the results on elliptic
distributions and we state a bivariate version of a general result of
\cite{balkema:embrechts:2007} which provides  geometric sufficient
conditions for~(\ref{eq:loi-limite}) to hold.  We also point out and
illustrate the local nature of the sufficient condition formulated by
\cite{balkema:embrechts:2007}, in the case of $\Gamma$-varying
upper tails.

\subsection{Elliptical distributions}
Consider a bivariate  elliptical random vector, i.e. a
 random vector $(X,Y)$ that can be expressed as
\begin{gather} \label{eq:elliptic}
  (X,Y) = R(\cos \Theta, \rho \cos\Theta + \sigma \sin \Theta)
\end{gather}
with $\sigma^2 = 1 -\rho^2$, in terms of a positive random variable
$R$ called ``radial component'' and an ``angular'' random variable
$\Theta$ uniformly distributed on $[0, 2\pi)$. The following result
was originally proved in the case $\rho=0$ as a technical lemma under
restrictive conditions in \cite{eddy:gale:1981}.  { The general
  result was first proved in \cite{berman:1983} in the bivariate case
  (see also \cite{berman:1992} and \cite{abdous:fougeres:ghoudi:2005})} and 
\cite{hashorva:2006} in a multivariate setting. Throughout the paper
$\Phi$ will denote the cdf of the standard normal distribution.
\begin{theo}[\cite{berman:1983}]
  \label{theo:afg}
  Let $(X,Y)$ be an elliptical random vector as defined in
  (\ref{eq:elliptic}).  If the radial component $R$ is in the domain
  of attraction of the Gumbel law, i.e.  its survival function {
    $1-H$ satisfies~(\ref{eq:rapvar})} with auxiliary function $\psi$,
  then $X$ and $Y$ are also in the domain of attraction of the Gumbel
  law and
  \begin{gather} \label{eq:ellipticresult}
    \lim_{t\to\infty} \pr(X \leq t + \psi(t) x \; , Y \leq \rho t +
    \sqrt{t \psi(t)} y \mid X>t) = (1-\mathrm{e}^{-x}) \Phi(y) \; .
  \end{gather}

\end{theo}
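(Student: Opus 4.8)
The plan is to reduce the elliptical case to a spherical computation and then to isolate the two limiting factors by a Laplace-type argument localized at the angle where the conditioning variable is maximal. I introduce $W=R\sin\Theta$, so that $(X,W)=(R\cos\Theta,R\sin\Theta)$ is spherically distributed and $Y=\rho X+\sigma W$. Since the conditioning event involves only $X$, I first observe that $\Gamma$-variation of $1/(1-H)$ forces the excess $X-t$ given $X>t$ to be of order $\psi(t)$; because $\psi(t)=o(t)$ entails $\psi(t)=o(\sqrt{t\psi(t)})$, the term $\rho(X-t)$ in $Y-\rho t=\rho(X-t)+\sigma W$ is negligible at the scale $\sqrt{t\psi(t)}$. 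Thus $\{Y\le\rho t+\sqrt{t\psi(t)}\,y\}$ coincides, up to an event of conditional probability $o(1)$, with $\{\sigma W\le\sqrt{t\psi(t)}\,y\}$, and \eqref{eq:ellipticresult} follows from the spherical limit for $(X,W)$ together with the linear relation $Y=\rho X+\sigma W$.

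For the spherical computation I write the conditional probability as the ratio
\[
\frac{\pr\bigl(t<X\le t+\psi(t)x,\ W\le\sqrt{t\psi(t)}\,z\bigr)}{\pr(X>t)}
\]
and evaluate numerator and denominator by conditioning on $\Theta$, using that $R$ and $\Theta$ are independent. For fixed $R$ the function $\theta\mapsto R\cos\theta$ attains its maximum at $\theta=0$, so the event $X>t$ concentrates $\Theta$ near $0$ (and, symmetrically, near $2\pi$); this is exactly the local mechanism around the maximum of the radial parametrization that Theorem~\ref{theo:r-t} later formalizes in a general setting. The natural scale is $\Theta=\sqrt{\psi(t)/t}\,\phi$, for which $\cos\Theta=1-\psi(t)\phi^2/(2t)+o(\psi(t)/t)$ and $R\sin\Theta=\sqrt{t\psi(t)}\,\phi\,(1+o(1))$, so that the radial threshold $t/\cos\Theta$ equals $t+\psi(t)\phi^2/2+o(\psi(t))$.

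Plugging this into \eqref{eq:rapvar}, the event $\{t<X\le t+\psi(t)x\}$ given $\Theta=\sqrt{\psi(t)/t}\,\phi$ becomes $t+\psi(t)\phi^2/2<R\le t+\psi(t)(x+\phi^2/2)$, whose probability normalized by $1-H(t)$ converges to $\mathrm{e}^{-\phi^2/2}(1-\mathrm{e}^{-x})$, while the constraint on $W$ reduces to $\phi\le z$. Integrating the factor $\mathrm{e}^{-\phi^2/2}$ against the uniform angular density, whose change of variable contributes the Jacobian $\sqrt{\psi(t)/t}$, turns the angular integral into a Gaussian one: the numerator behaves like $(1-H(t))\sqrt{\psi(t)/(2\pi t)}\,(1-\mathrm{e}^{-x})\,\Phi(z)$, and the same computation with $x=z=\infty$ gives $\pr(X>t)\sim(1-H(t))\sqrt{\psi(t)/(2\pi t)}$. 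The common radial factor cancels in the ratio, leaving the spherical limit $(1-\mathrm{e}^{-x})\Phi(z)$; since $Y-\rho t=\sigma W+o_{\pr}(\sqrt{t\psi(t)})$ and $W/\sqrt{t\psi(t)}$ converges to a standard Gaussian, the limiting law of $(Y-\rho t)/\sqrt{t\psi(t)}$ is centered Gaussian with variance $\sigma^2$, which supplies the normal factor $\Phi$ in \eqref{eq:ellipticresult}. The asymptotics of $\pr(X>t)$ also show that $1/(1-F_X)$ is $\Gamma$-varying with auxiliary function $\psi$, so that $X$, and likewise $Y$, belongs to the Gumbel domain.

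The \emph{main obstacle} is turning this Laplace heuristic into a genuine limit. The Taylor expansions of $\cos$ and $\sin$ and the convergence in \eqref{eq:rapvar} must be made uniform in the rescaled angle $\phi$ over compact sets, and the contribution of large $\phi$ must be dominated so that dominated convergence applies to the angular integral; here the self-neglecting property of $\psi$ and the uniform convergence theorems for $\Gamma$-varying functions provide the required Gaussian-integrable majorant of the form $C\mathrm{e}^{-c\phi^2}$ for the normalized radial increment, and they also show that the range of $\Theta$ bounded away from $0$ and $2\pi$ is negligible. A useful feature of organizing the argument through the survival function $1-H$, rather than through a density of $R$, is that no regularity of $R$ beyond \eqref{eq:rapvar} is needed.
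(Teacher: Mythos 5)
Your strategy is essentially the one the paper itself relies on: Theorem~\ref{theo:afg} is not proved directly there, but is recovered as the special case $u=\cos$, $v=\rho\cos+\sigma\sin$ (hence $\kappa=2$, $\delta=1$, $\tau=0$) of Theorem~\ref{theo:r-t}, and the proof of that theorem in Section~\ref{sec:preuve} is precisely your Laplace-type localization at the angle where $u$ attains its maximum, with the same rescaling (the change of variables $s=1+\omega\psi(x)/x$ there corresponds to your $\Theta=\sqrt{\psi(t)/t}\,\phi$ via $\omega=\phi^2/2$), the same appeal to locally uniform $\Gamma$-variation, and dominated convergence supplied by Lemmas~\ref{lem:borne-afg} and~\ref{lem:convunifcompact}. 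Your preliminary reduction to the spherical pair $(X,W)$, absorbing $\rho(X-t)$ at the scale $\sqrt{t\psi(t)}$, is also how the paper argues that $\rho\neq0$ follows from $\rho=0$ (comment~(iii) after Theorem~\ref{theo:r-t}). So the mechanism is right; two points, however, need repair.

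The substantive one is your final step, which as written is a non sequitur. Your computation correctly gives that, given $X>t$, $W/\sqrt{t\psi(t)}$ converges to $N(0,1)$, hence $(Y-\rho t)/\sqrt{t\psi(t)}=\rho(X-t)/\sqrt{t\psi(t)}+\sigma W/\sqrt{t\psi(t)}$ converges to a centered Gaussian with variance $\sigma^2$. But then the limit under the normalization written in~(\ref{eq:ellipticresult}) is $\Phi(y/\sigma)$, not $\Phi(y)$: a variance-$\sigma^2$ limit cannot ``supply the normal factor $\Phi$'' unless $\rho=0$. What your argument actually establishes is the statement with normalization $\rho t+\sigma\sqrt{t\psi(t)}\,y$, i.e.\ the form appearing in Theorem~\ref{theo:B-E} and~(\ref{eq:limite-gaussienne}) (and in Berman's original result); equation~(\ref{eq:ellipticresult}) as printed omits the factor $\sigma$, and your proof should end either by rescaling $y$ into $\sigma y$ or by flagging this discrepancy explicitly, not by gliding over it. The second, minor, point is the claimed majorant $C\mathrm{e}^{-c\phi^2}$: $\Gamma$-variation does not furnish exponential bounds on $\bar H(t+\psi(t)s)/\bar H(t)$ uniformly in $t$; the representation theorem yields only, for each fixed $p$, a polynomial bound $C(1+s)^{-p}$, which is exactly~(\ref{eq:borne-afg}) of Lemma~\ref{lem:borne-afg}. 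The repair costs nothing: taking $s=\phi^2/2$ and $p\geq1$, the majorant $C(1+\phi^2/2)^{-p}$ is integrable in $\phi$, and~(\ref{eq:reste}) disposes of the angles bounded away from $0$ and $2\pi$, so your dominated-convergence step goes through with the polynomial bound in place of the Gaussian one.
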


\paragraph{Comments on Theorem 1}
The fact that $X$ has $\Gamma$-varying upper tails follows
from~(\ref{eq:ellipticresult}) by taking $y=\infty$. Since the same
result holds with reversed roles for $X$ and $Y$, the consistency
result \cite[Theorem~2.2]{das:resnick:2008} implies that $(X,Y)$
belongs to the domain of attraction of a bivariate extreme value
distribution. Moreover, since the limiting distribution
in~(\ref{eq:ellipticresult}) has two independent marginals, $X$ and
$Y$ are asymptotically independent, i.e. the limiting extreme value
distribution is the product of its marginal distributions. See also
\citet[Section~3.2]{hashorva:2005} for a proof of this property in a
related context. In the case where $R$ has a regularly varying tail, the limiting distribution
is not a product and the vector $(X,Y)$ is asymptotically dependent. See
e.g. \cite[Theorem 1, part (i)]{abdous:fougeres:ghoudi:2005}. As mentioned in the introduction, we do
not develop this case.

If the radial component $R$ has a density $h$, then the vector $(X,Y)$
has the density $f$ defined by
\begin{gather*}
  f(x,y) = \frac{h(\sqrt{x^2+(y-\rho
      x)^2/\sigma^2})}{\sqrt{x^2+(y-\rho x)^2/\sigma^2}} \; .
\end{gather*}
The level lines of the density are homothetic ellipses $x^2+(y-\rho
x)^2/\sigma^2 = c^2$.

\begin{figure}[h]
  \centering
  \includegraphics[height=7cm]{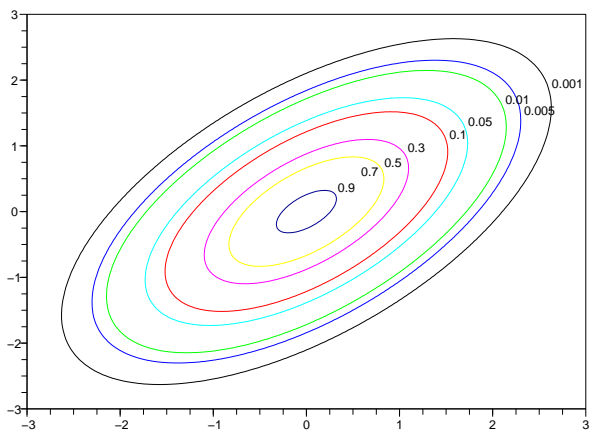}
  \caption{Level lines of the density of an elliptical distribution.
    The slope of the straight line is $\rho = .6$.}
  \label{fig:ellipses}
\end{figure}

This result can be generalized in two directions: either by weakening
the assumptions on the level lines of the density or by extending the
representation~(\ref{eq:elliptic}). The first generalization will be considered in the following
 subsection, and the second one in Section \ref{sec:radial}.

\subsection{Asymptotically elliptical distributions}
\label{sec:densite}

In this section, we state a bivariate version of \citet[Theorem
11.2]{balkema:embrechts:2007}. We first need the following definition
taken from \citet[Section~11.2]{balkema:embrechts:2007}. 
\begin{defi}
  \label{defi:plate}
  A function $L:\mathbb{R}^2\to\mathbb{R}_+$  belongs to the class
  $\mathcal L$ if for all $(x,y)\in\mathbb{R}^2$,
  \begin{gather*}
    \lim_{||(\xi,\zeta)||\to\infty} \frac{L(x+\xi,y+\zeta)}{L(\xi,\zeta)}
    = 1 \; ,
  \end{gather*}
  for any norm $||\cdot||$ on $\mathbb R^2$.
\end{defi}

\begin{hypo} \label{hypo:p-rond-n} The random vector $(X,Y)$ has a
  density $f$ such that
\begin{gather}
  f(x,y) = \mathrm{e}^{-I(x,y)} L(x,y) \; ,  \label{eq:densite-I-L}
\end{gather}
where $L \in \mathcal L$, and the function $I$  satisfies:
  \begin{gather}
    I(x,y) = p \circ n(x,y) \; , \nonumber \\
    p(r) = \int_0^r \frac{\mathrm{d} s} {\psi(s)} \; , \label{eq:p}
\end{gather}
$\psi$ is absolutely continuous with $\lim_{x\to\infty} \psi'(x) = 0$
and $n:\mathbb{R}^2\to\mathbb{R}$ is 1-homogeneous, $n^2$ is twice
differentiable and the Hessian matrix of $n$ is positive definite.
\end{hypo}

\begin{theo}[\cite{balkema:embrechts:2007}]
  \label{theo:B-E}
  Under Assumption~\ref{hypo:p-rond-n}, $X$ and $Y$ {are in the
    domain of attraction of the Gumbel law and
    satisfy~(\ref{eq:rapvar})} with auxiliary function $\psi$, are
  asymptotically independent, and there exist real numbers $\rho$ and
  $\sigma$ such that
  \begin{gather}
    \lim_{t\to\infty} \pr(X \leq t + \psi(t) x \; ; Y \leq \rho t +
    \sigma \sqrt{t\psi(t)} y \mid X>t) 
    = (1-\mathrm{e}^{-x}) \Phi(y) \; .
    \label{eq:limite-gaussienne}
\end{gather}
\end{theo}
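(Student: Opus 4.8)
The plan is to treat the limit (\ref{eq:limite-gaussienne}) as a two-dimensional Laplace (saddle-point) problem governed entirely by the local geometry of the gauge $n$ near a ``ridge''. The first step is geometric. Since $n$ is $1$-homogeneous, write $n(x,y)=x\,g(y/x)$ for $x>0$ with $g(s)=n(1,s)$; the twice-differentiability of $n^2$ and the positive-definiteness of the Hessian force the level sets of $n$ to be strictly convex, so $g$ is $C^2$ and strictly convex and attains a unique minimum $m=g(\rho)$ at a unique $\rho$, with $g'(\rho)=0$ and $\kappa:=g''(\rho)>0$. This identifies the ridge $y=\rho x$ along which, for fixed $x$, the exponent $I(x,y)=p(n(x,y))$ is minimal, and it yields the quadratic expansion $n(x,y)=m x+\tfrac{\kappa}{2x}(y-\rho x)^2+o\!\big((y-\rho x)^2/x\big)$ near the ridge. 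The computation below will show that the auxiliary function of $X$ comes out as $t\mapsto\psi(mt)/m$; matching this with the auxiliary function $\psi$ of the statement corresponds to normalizing the gauge so that $m=1$, which I adopt, whence the candidate constants are $\rho$ and $\sigma=1/\sqrt{\kappa}$.

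Next I treat the marginal tail of $X$ by integrating $f$ over $\{w>t\}$. For fixed $w$, Laplace's method in $z$ about the ridge, using the expansion above and $p(r+\delta)-p(r)\sim\delta/\psi(r)$, gives $\int_{\mathbb R} f(w,z)\,dz \sim L(w,\rho w)\,\sqrt{2\pi w\psi(mw)/\kappa}\;\mathrm e^{-p(mw)}$; here the factor $L\in\mathcal L$ is asymptotically constant on the effective $z$-window, of width $O(\sqrt{w\psi(w)})=o(w)$, by Definition~\ref{defi:plate}, so it survives only as a slowly varying prefactor. Integrating in $w$ and using $\tfrac{d}{dw}\mathrm e^{-p(mw)}=-\tfrac{m}{\psi(mw)}\mathrm e^{-p(mw)}$ (an integration by parts dominated by the exponential rate) yields $\pr(X>t)\sim \tfrac{1}{m}L(t,\rho t)\,\psi(mt)\,\sqrt{2\pi t\psi(mt)/\kappa}\,\mathrm e^{-p(mt)}$. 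The self-neglecting property of $\psi$ together with $\psi'\to0$ then shows this tail is $\Gamma$-varying with auxiliary function $\psi(m\cdot)/m$, so that $X$ is in the Gumbel domain and satisfies (\ref{eq:rapvar}).

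The conditional statement follows from the same expansion applied jointly. Writing the numerator of (\ref{eq:limite-gaussienne}) as $\int_t^{t+\psi(t)x}\!\int_{-\infty}^{\rho t+\sigma\sqrt{t\psi(t)}y} f(w,z)\,dz\,dw$, the inner $z$-integral is a truncated Gaussian: the slice-ridge $z=\rho w$ differs from $\rho t$ by $O(\psi(t))=o(\sqrt{t\psi(t)})$, so the upper limit sits at height $\sigma\sqrt{t\psi(t)}y$ above the ridge on the natural scale $\sqrt{t\psi(t)/\kappa}$, producing a factor tending to $\Phi(y)$ once $\sigma=1/\sqrt{\kappa}$. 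The $w$-integral, divided by $\pr(X>t)$, contributes $1-\mathrm e^{-x}$ because $p(m(t+\psi(t)x))-p(mt)\sim mx\psi(t)/\psi(mt)\to x$ under the normalization $m=1$. The product $(1-\mathrm e^{-x})\Phi(y)$ reflects the asymptotic decoupling of the radial (exponential) and transverse (Gaussian) fluctuations, giving the displayed limit.

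Asymptotic independence then follows exactly as in the comments after Theorem~\ref{theo:afg}: the roles of $X$ and $Y$ are symmetric in Assumption~\ref{hypo:p-rond-n}, so the analogous limit holds with the variables exchanged, both margins are in the Gumbel domain, and the consistency result \cite[Theorem~2.2]{das:resnick:2008} places $(X,Y)$ in a bivariate extreme-value domain of attraction; since the conditional limit factorizes, the limiting extreme-value distribution is a product. The main obstacle is to make the Laplace approximation rigorous and \emph{uniform} in $(x,y)$ on compacta. One must show that the mass of $f$ away from the ridge is negligible — this is precisely where strict convexity of the level sets of $n$ provides a quadratic lower bound on $n$ off the ridge — and that the three error sources are $o(1)$ uniformly: the third-order remainder in the expansion of $n$, the error in $p(r+\delta)-p(r)\approx\delta/\psi(r)$ (controlled by $\psi'\to0$), and the oscillation of $L$ (controlled by $L\in\mathcal L$). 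Establishing these bounds, together with the self-neglecting estimates that justify replacing $\psi(w)$ and $\psi(m(t+\psi(t)x))$ by $\psi(mt)$ throughout the relevant window, is the technical heart of the argument.
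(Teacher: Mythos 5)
First, a point about the comparison itself: the paper does not prove Theorem~\ref{theo:B-E}. It is imported (in bivariate form) from \cite{balkema:embrechts:2007}, Theorem~11.2, so there is no internal proof to measure your attempt against; what follows assesses your argument on its merits.

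Your skeleton is the natural one and most of its quantitative content checks out: the ridge $y=\rho x$ with $\rho=\arg\min_s n(1,s)$, the expansion $n(x,\rho x+z)=mx+\kappa z^2/(2x)+o(z^2/x)$, the identification $\sigma=1/\sqrt{\kappa}$ (consistent with the paper's characterization $n(1+x,\rho+\sigma y)=1+x+y^2/2+o(x^2+y^2)$ and with the vertical tangent at $(1,\rho)$), the decoupling of radial exponential and transverse Gaussian fluctuations because $\psi(t)=o(\sqrt{t\psi(t)})$, and the symmetry-plus-consistency route to asymptotic independence, which is exactly the argument the paper itself uses in the comments after Theorem~\ref{theo:afg}.

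The genuine gap is your treatment of $L$, and it is not the removable technicality you present it as. You claim that $L$ ``is asymptotically constant on the effective $z$-window, of width $O(\sqrt{w\psi(w)})=o(w)$, by Definition~\ref{defi:plate}''. Definition~\ref{defi:plate} controls $L(x+\xi,y+\zeta)/L(\xi,\zeta)$ only for a \emph{fixed} displacement $(x,y)$ (at best, by a uniform-convergence upgrade, for displacements ranging in a compact set). It gives no control over the oscillation of $L$ across windows whose width tends to infinity, and both of your windows grow: $\sqrt{t\psi(t)}$ transversally and $\psi(t)$ radially (the same unjustified freezing occurs when you replace $L(w,\rho w)$ by $L(t,\rho t)$ over $w-t=O(\psi(t))$ in the marginal-tail computation). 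To see that the gap cannot be closed from Definition~\ref{defi:plate} alone, take $n(x,y)=\{x^2+(y-\rho x)^2\}^{1/2}$ with $\rho\neq0$ (so the Hessian of $n^2$ is positive definite), $\psi(t)=t/\log^2 t$ for large $t$ (suitably modified near the origin; $\psi'\to0$ holds), and $L(x,y)=\exp\bigl(\sin\bigl(\log^2(1+\|(x,y)\|)\bigr)\bigr)$. This $L$ belongs to $\mathcal L$, since a bounded translation changes $\log^2\|\cdot\|$ by $O(\log\|p\|/\|p\|)\to0$; yet along the window $y=\rho t+\sqrt{t\psi(t)}\,s$, with $s$ in a compact set, one computes $\log^2\|(t,y)\|=\log^2(ct)+\tfrac{2\rho}{1+\rho^2}\,s+o(1)$, $c=\sqrt{1+\rho^2}$, so the density carries the factor $\exp\bigl(\sin(\theta(t)+as)\bigr)$ with $a=2\rho/(1+\rho^2)\neq0$ and $\theta(t)=\log^2(ct)$ not convergent modulo $2\pi$. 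Hence the conditional law of $(Y-\rho t)/\sqrt{t\psi(t)}$ given $X>t$ has non-Gaussian subsequential limits, and~(\ref{eq:limite-gaussienne}) fails for every choice of $\rho$ and $\sigma$, even though Assumption~\ref{hypo:p-rond-n} as written is satisfied. What makes the theorem true in \cite{balkema:embrechts:2007} is a strictly stronger flatness hypothesis: flatness relative to the normalizing scalings, i.e.\ $L(p_t+A_tq)/L(p_t)\to1$ uniformly for $q$ in compacta, where $A_t$ incorporates the scales $\psi(t)$ and $\sqrt{t\psi(t)}$. Carrying out the Laplace argument under that hypothesis --- bounding the off-ridge mass and the three error terms you list --- is precisely the content you have deferred; it is indeed the technical heart, but it cannot be discharged by citing Definition~\ref{defi:plate}, which is what your sketch does.
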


  This result shows that Assumption~\ref{hypo:p-rond-n} is a
  sufficient condition for the limit~(\ref{eq:loi-limite}) to hold,
  with $m(x) = \rho x$, $a(x) = \sigma \sqrt{x\psi(x)}$ and $K(x,y) =
  (1-\mathrm{e}^{-x})\Phi(y)$.  The constants $\rho$ and $\sigma$ are
  characterized by the second order expansion of the function $n$
  \begin{gather*}
    n(1+x,\rho + \sigma y) = 1+x+y^2/2 +o(x^2+y^2) \; .
  \end{gather*}
  This condition implies that the tangent at the point $(1,\rho)$ to
  the curve $n(\xi,\zeta)=1$ is vertical. Note that the level lines of
  the function $n$ are not those of the density $f$ defined
  in~(\ref{eq:densite-I-L}), unless the function $L$ is constant, but,
  loosely speaking, the level lines of $f$ converge to those of $n$.
  Theorem~\ref{theo:B-E} implies Theorem~\ref{theo:afg} when the
  radial distribution is absolutely continuous.

\begin{xmpl}
  Let $h$ and $g$ be  density functions defined
  on $[0,\infty)$ and $[-\pi/2,\pi/2]$, respectively.  The function
  $f$ defined by
  \begin{gather}
    f(x,y) = \frac{h(x^2+(y-\rho x)^2/(1-\rho^2))}{\sqrt{x^2+(y-\rho
        x)^2/(1-\rho^2)}} \; g\circ\arctan((y-\rho
    x)/x\sqrt{1-\rho^2}) \label{eq:densite-exemple}
  \end{gather}
  is then a bivariate density function on $\mathbb{R}^2$.  If $g$ is a
  constant, then $f$ is the density of an elliptical vector. If $h$
  can be expressed as in~(\ref{eq:p}) and if $g$ is continuous and
  bounded above and away from zero, then $f$ satisfies
  Assumption~\ref{hypo:p-rond-n}.  Figure~\ref{fig:ellipse-flat} shows
  the level lines of such a density, with $\rho=.6$, $h(t) =
  \exp(t^2/2)/\sqrt{2\pi}$ and $g(t) = c \{1+[t^2 -
  (\pi/4)^2]^2\}$.  The level lines seem to be asymptotically
  homothetic.

\end{xmpl}

\begin{figure}[h]
  \centering
  \includegraphics[height=8cm]{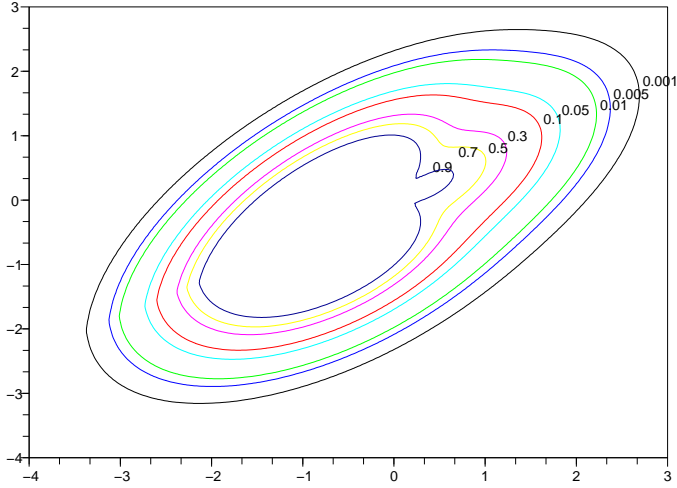}
  \caption{Level lines of the density given
    by~(\ref{eq:densite-exemple}).}
  \label{fig:ellipse-flat}
\end{figure}

  It is important to note that under Assumption~\ref{hypo:p-rond-n}
  the normalizing functions $m$ and $a$ satisfy $a(x) = o(m(x))$,
  since in the present context $m(x) = \rho x$ and $a(x) = \sigma
  \sqrt{x\psi(x)}$ with $\psi(x) = o(x)$. This implies that only the
  local behaviour of the curve $n(\xi,\zeta)= 1$ around the point
  $(1,\rho)$ matters.  In other words, the
  limit~(\ref{eq:limite-gaussienne}) still holds if $(X,Y)$ is
  conditioned to remain in the cone $\{(\rho-\epsilon)x \leq y \leq
  (\rho+\epsilon) x\}$ for any arbitrarily small $\epsilon>0$. This
  suggests that Assumption~\ref{hypo:p-rond-n} must only be checked
  {\em locally} to obtain the limit~(\ref{eq:limite-gaussienne}).


\begin{xmpl}[Mixture of two bivariate Gaussian vectors]
  \label{xmpl:melange-gaussiennes}
  Let $B$ be a Bernoulli random variable such that $\pr(B = 1) = p \in
  (0,1)$. Let $X$ and $Z$ be two i.i.d standard gaussian random
  variables, $\rho \ne \tau \in[-1,1]$ and define $Y$ by
\begin{gather}\label{eq:melange-gaussiennes}
  Y = B (\rho X + \sqrt{1-\rho^2} Z) + (1-B) (\tau X +
  \sqrt{1-\tau^2} Z) \; .
\end{gather}
Then $Y$ is a standard Gaussian variable, and $(X,Y)$ is a mixture of
two Gaussian vectors. Figure~\ref{fig:melange-gauss} shows the level
curves of the density function of the pair $(X,Y)$ with $p=.4$,
$\rho=.8$ and $\tau=-.4$.  The density function of $(X,Y)$ does not
satisfy Assumption~\ref{hypo:p-rond-n}, and Theorem~\ref{theo:B-E} cannot
be applied. Indeed, applying Theorem~\ref{theo:afg} to each component
of the mixture yields
\begin{align*}
  \pr(Y & - \rho x  \leq \sqrt{1-\rho^2} z \mid X>x) \\
  & = p \, \pr(\rho
  (X-x) + \sqrt{1-\rho^2}Z \leq \sqrt{1-\rho^2}z \mid X>x) \\
  & + (1-p) \pr(\tau (X-x) + (\tau - \rho) x + \sqrt{1-\tau^2} Z \leq
  \sqrt{1-\rho^2}z \mid X>x) \\
  & \sim p\Phi(z) + (1-p) \mathbbm{1}_{\{\tau<\rho\}} \; .
\end{align*}
Thus the limiting distribution is degenerate, with a positive mass
either at~$-\infty$ or $+\infty$.
\begin{figure}[h]
  \centering
  \includegraphics[width=10cm]{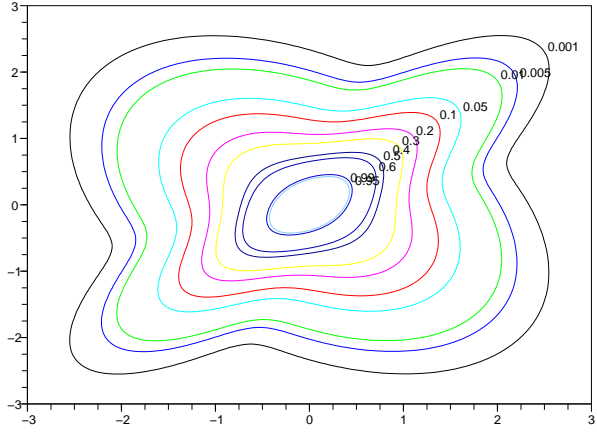}
  \caption{Level lines of the density function of the pair $(X,Y)$
    defined in Example~\ref{xmpl:melange-gaussiennes}.}
  \label{fig:melange-gauss}
\end{figure}
However, a proper limiting distribution can be obtained for $(X,Y)$
conditioned to remain in $\mathcal{C} = \{(x,y)\in\mathbb{R}^2 \mid c_1 x
\leq y \leq c_2 x\}$ for $c_1< c_2$ such that $\rho \in [c_1,c_2]$ and
$\tau \notin [c_1,c_2]$. Denote $\pr( \cdot \mid (X,Y) \in
\mathcal{C})$ by $\pr_{\mathcal C}(\cdot)$. Then
\begin{gather} \label{eq:limit-mixture}
  \lim_{x\to\infty} \pr_{\mathcal C}( Y \leq \rho x + \sqrt{1-\rho^2} z
  \mid X>x) = \Phi(z) \; .
\end{gather}
To prove this claim, we assume without loss of generality that
$\rho=0$.  Then
\begin{gather*}
  \pr_{\mathcal C}( Y \leq z \mid X>x) = \frac{ \pr(Y \leq z \; ; \
    (X,Y) \in \mathcal C \mid X>x)}{ \pr((X,Y) \in \mathcal C \mid
    X>x) } \; .
\end{gather*}
For fixed $z$ and $x>z/c_2$, it holds that
\begin{align*}
  \pr(Y & \leq z \; ; \ (X,Y) \in \mathcal C \mid X>x)  \\
  & = p \, \pr(c_1 X \leq Z \leq z \mid X>x) \\
  & \ \ \ + (1-p) \pr(c_1 X \leq \tau X + \sqrt{1-\tau^2} Z \leq z
  \mid X>x) \sim p \Phi(z) \; .
\end{align*}
Since $\rho\in[c_1,c_2]$ and $\tau\notin[c_1,c_2]$, and since $z \leq
c_2 x$, it is easily obtained that
\begin{gather*}
  \lim_{x\to\infty}   \pr(c_1 X \leq Z \leq z \mid X>x) = \Phi(z) \; ,  \\
  \lim_{x\to\infty} \pr(c_1 X \leq \tau X + \sqrt{1-\tau^2} Z \leq z
  \mid X>x) = 0 \; .
\end{gather*}
Thus $\lim_{x\to\infty} \pr(Y \leq z \; ; \ (X,Y) \in \mathcal C \mid
X>x) = p\Phi(z)$,
which proves~(\ref{eq:limit-mixture}).
\end{xmpl}

\section{Bivariate vectors with polar representation}
\label{sec:radial}

In this section, we show that Theorem~\ref{theo:afg} can be extended
from elliptical distributions to more general bivariate distributions
that admit a radial representation $R(u(T),v(T))$ where $R$ and $T$
are independent, $T$ is not uniformly distributed and $u$ and $v$ are
more general functions than in the elliptical case.  We start by
collecting the assumptions that will be needed.

\begin{hypo} \ \\ \vspace*{-2em} \label{hypo:u-v}
  \begin{enumerate}[A]
  \item \label{item:hypo-u} The function $u:[0,1]\to [0,1]$ is
    continuous, has a unique maximum 1 at a point $t_0 \in (0,1)$ and
    has an expansion
    \begin{gather}
      u(t_0+t) = 1 - \ell(t)
      \label{eq:u-local}
  \end{gather}
  where $\ell$ is decreasing from $[-\epsilon,0]$ to $[0,\eta_-]$ and increasing from
  $[0,\epsilon]$ to $[0,\eta_+]$ for some $\epsilon, \eta_-, \eta_+ >0$, and regularly varying at zero
  with index $\kappa>0$.  The functions  $\ell^\leftarrow_- : [0,\eta_-] \to  [-\epsilon,0]$ 
  and  $\ell^\leftarrow_+ : [0,\eta_+] \to  [0,\epsilon]$ respectively defined as  $\ell^\leftarrow_-(s) =\sup\{ t \in  [-\epsilon,0] : l(t)\leq s \}$
and $\ell^\leftarrow_+(s) =\inf\{ t \in  [0,\epsilon] : l(t)\geq s \}$ are absolutely
  continuous and their derivatives  $(\ell^\leftarrow_-)'(s)$ and  $(\ell^\leftarrow_+)'(s)$
  are  regularly varying at zero with index $1/\kappa-1$.
\item \label{item:hypo-v} The function $v$ defined on $[0,1]$ is
  strictly increasing in a neighborhood of $t_0$, $v(t_0) = \rho$, and
  the function $t\mapsto v(t_0+t)-\rho$ is regularly varying with
  index $\delta>0$. Its inverse $v^\leftarrow$ is absolutely
  continuous and its derivative is regularly varying at zero with
  index $1/\delta-1$.
\end{enumerate}
\end{hypo}

\begin{hypo}
  \label{hypo:T}
  The density function $g:[0,1]\to\mathbb{R}^+$ is regularly varying
  at $t_0$ with index $\tau>-1$ and bounded on the compact subsets of
  $[0,1]\setminus\{t_0\}$.
\end{hypo}

\begin{theo}
  \label{theo:r-t}
  Let $R$ {be in the domain of attraction of the Gumbel law with
    auxiliary function $\psi$, i.e.  its distribution function $H$
    satisfies~(\ref{eq:rapvar})}. Let $T$ be a random variable that
  admits a density $g$ that satisfies Assumption~\ref{hypo:T}.  Let
  the functions $u$ and $v$ satisfy Assumption~\ref{hypo:u-v} with
  $\delta<\kappa$.  Define $(X,Y) = R(u(T),v(T))$. Then,
  \begin{enumerate}[(i)]
  \item {the random variable $X$ is in the domain of attraction of the
    Gumbel law} and there exists a function $k$ regularly varying at
    zero with index $(1+\tau)/\kappa$ such that
  \begin{gather}
    \pr(X>x) \sim k(\psi(x)/x) \bar H(x) \; ; \label{eq:uppertail-x}
  \end{gather}
\item there exists a function $h$ regularly varying at zero with index
  $\delta/\kappa$ such that for all $y \in \mathbb{R}$,
  \begin{multline}
    \lim_{t\to\infty} \pr(X \leq t + \psi(t) x \; , Y \leq \rho t +
    t h(\psi(t)/t) y \mid X>t) \\
    = (1-\mathrm{e}^{-x}) H_{\kappa/\delta,(1+\tau)/\delta}(y) \; ,
    \label{eq:deltakappatau}
  \end{multline}
  with
\begin{gather*}
  H_{\eta,\zeta}(y) = \frac{ \int_{-\infty}^y
    \mathrm{e}^{-|s|^{\eta}/\eta} \, |s|^{\zeta- 1} \, \mathrm{d} s } {
    \int_{-\infty}^\infty \mathrm{e}^{-|s|^{\eta}/\eta} \, |s|^{\zeta-
      1} \, \mathrm{d} s } \; .
\end{gather*}
\end{enumerate}
\end{theo}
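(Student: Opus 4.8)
The plan is to exploit the independence of $R$ and $T$ so that every probability becomes an integral against $\mathrm dH\otimes g$, to localize the mass near the maximizer $t_0$ of $u$, and to read off both the tail of $X$ and the conditional limit from the regular variation of the ingredients after an explicit change of variables. Throughout write $\theta:=\psi(t)/t\to0$.

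\textbf{Part (i).} By independence, $\pr(X>x)=\int_0^1\bar H\!\left(x/u(s)\right)g(s)\,\mathrm ds$. For $s$ bounded away from $t_0$ one has $u(s)\le 1-c<1$, so $x/u(s)\ge x(1+c')$ and $\Gamma$-variation forces $\bar H(x/u(s))$ to be smaller than $\bar H(x)$ times any power of $\theta$; since $g$ is bounded on such compacts by Assumption~\ref{hypo:T}, this region is negligible. Near $t_0$ write $s=t_0+\sigma$, so $u(s)=1-\ell(\sigma)$ and $x/u(s)=x+x\ell(\sigma)+o(x\ell(\sigma))$, whence $\bar H(x/u(s))\sim\bar H(x)\,\mathrm e^{-x\ell(\sigma)/\psi(x)}$ by~(\ref{eq:rapvar}). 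Substituting $w=x\ell(\sigma)/\psi(x)$ on each side of $t_0$, i.e. $\sigma=\ell^\leftarrow_\pm(w\theta)$, and using $(\ell^\leftarrow_\pm)'(w\theta)\sim(\ell^\leftarrow_\pm)'(\theta)\,w^{1/\kappa-1}$ together with $g(t_0+\ell^\leftarrow_\pm(w\theta))\sim g(t_0+\ell^\leftarrow_\pm(\theta))\,w^{\tau/\kappa}$ (regular variation of $(\ell^\leftarrow_\pm)'$ and of $g$ at $t_0$), the integrand converges to $\mathrm e^{-w}w^{(1+\tau)/\kappa-1}$ times slowly moving prefactors. The $w$-integral yields $\Gamma((1+\tau)/\kappa)$, and collecting the prefactors defines
\[
 k(\theta)=\Gamma\!\big((1+\tau)/\kappa\big)\,\theta\Big[g(t_0+\ell^\leftarrow_+(\theta))(\ell^\leftarrow_+)'(\theta)+g(t_0+\ell^\leftarrow_-(\theta))\big|(\ell^\leftarrow_-)'(\theta)\big|\Big],
\]
whose regular variation index is $1+(1/\kappa-1)+\tau/\kappa=(1+\tau)/\kappa$, giving~(\ref{eq:uppertail-x}). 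That $X$ itself lies in the Gumbel domain follows because $k(\psi(x)/x)$ is asymptotically constant along the flow $x\mapsto x+\psi(x)w$: the self-neglecting property of $\psi$ gives $\psi(x+\psi(x)w)/(x+\psi(x)w)\sim\theta$, so $\pr(X>\cdot)$ inherits the $\Gamma$-variation of $\bar H$ with the same auxiliary function $\psi$.

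\textbf{Part (ii).} I would write the numerator $\pr(t<X\le t+\psi(t)x,\ Y\le\rho t+t\,h(\theta)y)$ as a double integral over $(R,T)=(R,t_0+\sigma)$ against $\mathrm dH(R)\,g(t_0+\sigma)\,\mathrm d\sigma$, and introduce the three natural variables $\rho_R=(R-t)/\psi(t)$, the $u$-level $w=t\ell(\sigma)/\psi(t)$, and the $Y$-displacement $M=(v(t_0+\sigma)-\rho)/h(\theta)$, with $h(\theta):=v(t_0+\ell^\leftarrow_+(\theta))-\rho$ (regularly varying of index $\delta/\kappa$). A direct expansion gives $X=t+\psi(t)(\rho_R-w)+o(\psi(t))$, so the events $X>t$ and $X\le t+\psi(t)x$ read $0<\xi\le x$ for $\xi:=\rho_R-w$. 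The decisive point is the scale separation $\psi(t)=t\theta=o\big(t\,h(\theta)\big)$, valid precisely because $\delta<\kappa$ makes $h(\theta)\asymp\theta^{\delta/\kappa}\gg\theta$; it ensures that the contribution of the $X$-excess to $Y$ is negligible, so $(Y-\rho t)/(t\,h(\theta))\to M$. Using the density form of $\Gamma$-variation, $\mathrm dH(R)\sim\bar H(t)\,\mathrm e^{-\rho_R}\,\mathrm d\rho_R$, and the change of variables $(\rho_R,\sigma)\mapsto(\xi,M)$, the limiting joint law factorizes as $\mathrm e^{-\xi}\mathbbm 1_{\{\xi>0\}}\,\mathrm d\xi$ against a law in $M$ alone; integrating $\xi$ over $(0,x]$ produces the factor $1-\mathrm e^{-x}$ and reduces the claim to identifying the limiting law of $M$.

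To identify that law, on $\sigma>0$ the relation $w\mapsto M$ is $M=w^{\delta/\kappa}$ (up to the constant fixed into $h$), which pushes the Gamma-type density forward,
\[
 \mathrm e^{-w}w^{(1+\tau)/\kappa-1}\,\mathrm dw\ \longmapsto\ \mathrm e^{-M^{\kappa/\delta}}M^{(1+\tau)/\delta-1}\,\mathrm dM ,
\]
and after rescaling $h$ by a constant to turn $\mathrm e^{-M^{\kappa/\delta}}$ into $\mathrm e^{-M^{\kappa/\delta}/(\kappa/\delta)}$ this is exactly the right half of the density of $H_{\kappa/\delta,(1+\tau)/\delta}$. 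The branch $\sigma<0$ produces the same shape on $M<0$, and normalizing by the total mass $\int_{-\infty}^\infty\mathrm e^{-|s|^{\kappa/\delta}/(\kappa/\delta)}|s|^{(1+\tau)/\delta-1}\,\mathrm ds$ yields~(\ref{eq:deltakappatau}).

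\textbf{Main obstacle.} I expect the difficulty to be twofold. First, every ``$\sim$'' above is a pointwise-in-$w$ statement, so passing the limit under the integral requires uniform control: Potter-type bounds for the regularly varying functions $(\ell^\leftarrow_\pm)'$, $g$ and $v^\leftarrow$, a uniform version of the $\Gamma$-variation estimate, and the super-exponential decay of $\bar H(x/u(s))$ away from $t_0$ to supply an integrable dominating function; these are presumably the auxiliary results of Section~\ref{sec:lemmes}. Second, and more delicate, obtaining the \emph{symmetric} limit $H_{\kappa/\delta,(1+\tau)/\delta}$ requires the two branches $\sigma\gtrless0$ to contribute with matching exponential scale, i.e. $\rho-v(t_0+\ell^\leftarrow_-(\theta))$ and $v(t_0+\ell^\leftarrow_+(\theta))-\rho$ must agree asymptotically; this left/right balance is where the common indices $\kappa,\delta,\tau$ on both sides of $t_0$ in Assumptions~\ref{hypo:u-v} and~\ref{hypo:T} are used most sharply, and it is the step I would treat with the greatest care.
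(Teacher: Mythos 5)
Your proposal is correct in substance, and at bottom it runs on the same engine as the paper's proof in Section~\ref{sec:preuve}: localization of the $T$-integral near $t_0$ (your ``smaller than $\bar H(x)$ times any power of $\theta$'' bound away from $t_0$ is exactly~(\ref{eq:reste}) of Lemma~\ref{lem:borne-afg}), the change of variables to the exponential scale $w=x\ell(\sigma)/\psi(x)$, regular-variation bookkeeping for $(\ell^\leftarrow_\pm)'$, $g$ and $v$, and a uniform-integrability statement to pass limits under the integral (the paper's Lemma~\ref{lem:convunifcompact}, which is precisely the ``Potter bounds plus uniform $\Gamma$-variation'' control you ask for); your self-neglecting argument that $X$ inherits the Gumbel domain of attraction with auxiliary function $\psi$ is also correct, and is what the paper leaves as the ``straightforward'' remainder. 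Where you genuinely diverge is the decomposition in part (ii). The paper computes the joint survival function $\pr(X>x,\,Y>y)$, integrating $R$ out \emph{first}, so that everything is a one-dimensional integral of $\bar H\bigl(x/u(t)\vee y/v(t)\bigr)g(t)$; the max is handled by splitting at the crossover point $t_1$ where $(v/u)(t_1)=y/x$, and $\delta<\kappa$ enters by showing that the piece where $y/v$ dominates is negligible ($I=o(J)$, because the associated function $k_2$ has strictly larger regular-variation index than $k_1$). You instead keep the double integral over $(R,T)$, pass to the coordinates $(\xi,M)$, and use $\delta<\kappa$ as the scale separation $\psi(t)=o\bigl(t\,h(\theta)\bigr)$ that decouples the $X$-excess from the $Y$-displacement; the factor $1-\mathrm e^{-x}$ then falls out of the $\xi$-marginal, and part (i) is a separate computation reused as the denominator. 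The two uses of $\delta<\kappa$ are equivalent in effect, but your organization makes the limiting probabilistic structure (asymptotic independence of the exponential excess and the angular displacement) more transparent, while the paper's reduction to one-dimensional integrals needs only Lemma~\ref{lem:convunifcompact} and delivers (i) and (ii) from a single asymptotic equivalent.

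Two caveats. First, the step $\mathrm dH(R)\sim\bar H(t)\,\mathrm e^{-\rho_R}\,\mathrm d\rho_R$ is not available as written: $R$ is only assumed to satisfy~(\ref{eq:rapvar}), not to have a density. This is harmless for your argument, because for fixed $\sigma$ your constraints on $R$ cut out an interval, so the inner integral is a difference of two values of $\bar H$ to which~(\ref{eq:rapvar}) applies directly; i.e., you only need weak convergence of the rescaled law of $(R-t)/\psi(t)$, and the proof should be phrased that way rather than through a density. Second, your worry about the left/right balance is well placed and is resolved exactly as you suspect: regular variation of $\ell$, of $v(t_0+\cdot)-\rho$ and of $g$ at $t_0$ in Assumptions~\ref{hypo:u-v} and~\ref{hypo:T} is two-sided, which forces $\ell^\leftarrow_-(s)\sim-\ell^\leftarrow_+(s)$, $\rho-v\bigl(t_0+\ell^\leftarrow_-(\theta)\bigr)\sim v\bigl(t_0+\ell^\leftarrow_+(\theta)\bigr)-\rho$, and matching prefactors on the two branches, whence the symmetric limit $H_{\kappa/\delta,(1+\tau)/\delta}$; the paper handles this by treating $y/x>\rho$ in detail and invoking symmetry for $y/x<\rho$.
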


The proof of this result is in Section~\ref{sec:preuve}. One of its
main ingredient is the fact that the tail of $R$ is {
  $\Gamma$-varying}. This implies that the normalizing function $a(x)
= xh(\psi(x)/x)$ is $o(x)$. As a consequence, only the local behavior
of $u$ and $v$ around $(1,\rho)$ matters. This is similar to what was
observed under Assumption~\ref{hypo:p-rond-n}.

\subsubsection*{Comments on Theorem~\ref{theo:r-t}}
\begin{enumerate}[(i)]
\item The case $u(t)= \cos(t)$ and $v(t)=\sin(t)$ was considered by
  \cite{hashorva:2008polar}. The difference between the present
  results and this reference is not only that we consider more general
  functions $u$ and $v$, but more importantly that we point out the
  local nature of the assumptions on $u$ and $v$.

\item  Theorem~\ref{theo:r-t} handles situations where the assumptions of
Theorem~\ref{theo:B-E} do not hold. In some cases, the limiting
distribution is nevertheless the Gaussian distribution and the
normalization is the same as in Theorem~\ref{theo:r-t}; see
Example~\ref{xmpl:pareil}.
In other cases, the limiting distribution and the normalization differ
from those that appear in Theorem~\ref{theo:B-E}. There are two  reasons for this: the density of $T$ can vanish or be unbounded at
zero, or the curvature of the line parameterized by the functions $u$
and $v$ at the point $(u(t_0),v(t_0))$ can be infinite or zero. This
is illustrated in Example~\ref{xmpl:p-elliptique}.

The asymptotic distribution $H_{\kappa/\delta,(1+\tau)/\delta}$ is of
the so-called Weibull type with shape parameter $\kappa/\delta$. This
ratio characterizes the geometric nature of the curve
$t\mapsto(u(t),v(t))$ around $t_0$ and is independent of the
particular choice of the parametrization $t\mapsto(u(t),v(t))$. Its
right tail is lighter than the exponential distribution.  The
behavior of the density of $T$ has influence only on the less
important parameter $\zeta$.  The normalizing function $h$ also
depends only on $u$ and $v$.

\item  Denote $a(x)$ the normalizing function in~(\ref{eq:deltakappatau}):
$a(x)=xh(\psi(x)/x)$.  The assumption $\delta<\kappa$ implies that
$\psi(x)=o(a(x))$, so that $(X - x)/a(x)$ converges weakly to zero
given that $X>x$ as $x\to\infty$.  Hence
\begin{gather*}
  \lim_{x\to\infty} \pr \left( \frac{Y - \rho X}{a(x)} \leq y \mid X >
    x \right) = H_{\kappa/\delta,(1+\tau)/\delta}(y) \; .
\end{gather*}
This implies in particular that the case $\rho\ne0$ can be deduced
from the case $\rho=0$ by a linear transformation.

Since the function $h$ is regularly varying with index
$\delta/\kappa>0$ and since $\psi$ is self-neglecting, it also holds
that $a(X)/a(x)$ converges weakly to 1 given that $X>x$ as $x\to\infty$,
thus the conditional convergence also holds with random normalization:
\begin{gather*}
  \lim_{x\to\infty} \pr \left( \frac{Y - \rho X}{a(X)} \leq y \mid X >
    x \right) = H_{\kappa/\delta,(1+\tau)/\delta}(y) \; .
\end{gather*}
This is a particular case of
\citet[Proposition~5]{heffernan:resnick:2007}.

\item Note that Theorem~\ref{theo:r-t}
  states that $X$ is in the domain of attraction of the Gumbel law,
  with the same auxiliary function $\psi$ as $R$. However, little can
  be said about the tails of $Y$ other than $\pr(Y>y)\leq
  \pr(R>y/v^*)$, where $v^*=\max_{t\in[0,1]}v(t)$.  This is because
  nothing is assumed about of the behaviour of $v$ and $g$ around the
  point where $v$ has a maximum.  This would be needed to obtain an asymptotic form
for the tail of $Y$ similar to~(\ref{eq:uppertail-x}).
  It is in contrast with the situation of Theorems~\ref{theo:afg}
  and~\ref{theo:B-E}.  Nevertheless, it can first be proved that
\begin{gather} \label{eq:equiv-b}
  b_Y(t) \sim v^* b(t)
\end{gather}
where $b$ and $b_Y$ are the inverse functions of $1/\pr(R>\cdot)$ and
$1/\pr(Y>\cdot)$, respectively (see a proof in
Section~\ref{sec:lemmes}).  This implies that if $Y$ does belong to
the domain of attraction of an extreme value distribution, then this
distribution is necessarily the Gumbel law (since $Y$ has a lighter
tail than $v^*R$ and unbounded support) and $(X,Y)$ belongs to the
domain of attraction of a bivariate extreme value distribution with
independent marginals, i.e.  $X$ and $Y$ are asymptotically
independent. This is shown in Corollary~\ref{coro:CEV-AI} below.

\end{enumerate}

\subsection{Relations with CEV  and EV models}
\label{sec:CEV-EV}

As mentioned in the introduction, \cite{das:resnick:2008} referred as CEV models
the  families of distributions satisfying Condition (\ref{eq:loi-limite}). 
An important finding of  \cite{heffernan:resnick:2007} is that in such a model, it is not possible to
transform  $X$ and $Y$ to prescribed marginals $F_1(X)$ and $F_2(Y)$, for given univariate cdf $F_1, F_2$, when the
limiting distribution $K$  is the product of its marginals. Theorem~\ref{theo:r-t}  provides random vectors $(X,Y)$ for which 
the limiting distribution in~(\ref{eq:loi-limite}) is precisely a product, so that nonlinear
transformations of the marginals to prescribed marginals are
impossible. This is in
contrast with the usual practice of standard multivariate extreme
value theory. 

 As noted by
\citet[Section~1.2]{das:resnick:2009}, another advantage of the CEV
approach is that it does not require the assumption that all
components of the vector belong to the domain of attraction of a
univariate extreme value distribution. 
The relationship between CEV models and EV models has been investigated in \cite{das:resnick:2008}. They
proved in particular that if $(X,Y)$ belongs to the domain of
attraction of a bivariate extreme value distribution with asymptotic
dependence, then the CEV model does not provide any more information
than the EV model.  See also
\citet[Section~1.2]{das:resnick:2009}.  Conversely,
\citet[Proposition~4.1]{das:resnick:2009} gives  conditions under which,
if $(X,Y)$ satisfies~(\ref{eq:loi-limite}) and $Y$ belongs to the
domain of attraction of a univariate extreme value distribution, then
$(X,Y)$ belongs to the domain of attraction of a bivariate extreme
value.

The next result elucidates the relationship between the conditional
limit of Theorem~\ref{theo:r-t} and extreme value theory. It is
similar to \citet[Proposition~4.1]{das:resnick:2008} but covers cases
ruled out by this reference, as shown afterwards in
Corollary~\ref{coro:CEV-AI}. In the following, the extreme value
  distribution with index $\gamma$ is denoted by $G_\gamma$, that is to say 
  $G_\gamma(y)=\exp\{-(1+\gamma y)^{-1/\gamma}\}$ for each $y$ such that $1+\gamma y >0$ for
  $\gamma\neq 0$, and $G_0(y)=\exp\{-e^{-x}\}$.

\begin{prop} \label{prop:CEV-AI}
  Assume that the vector $(X,Y)$ satisfies~(\ref{eq:loi-limite}) and
  that $Y$ belongs to the domain of attraction of an extreme value
  distribution $G_\gamma$ with auxiliary function $\psi_Y$, i.e. 
  \begin{align*}
    \lim_{t\to\infty} \pr(Y>t+\psi_Y(t) y \mid Y>t) = - \log G_\gamma(y) \; .
  \end{align*}
  Assume moreover that  for all $y\in\mathbb{R}$, 
 \begin{align} \label{eq:condition-AI}
   \lim_{t\to\infty} \frac{b_Y(t) - m \circ b_X(t)+\psi_Y\circ b_Y(t)
     y} {a\circ b_X(t)} = +\infty \;
 \end{align}
 where $b_X$ and $b_Y$ are  the inverse functions of  $1/\pr(X>\cdot)$,
 of $1/\pr(Y>\cdot)$, respectively.  Then, $(X,Y)$ belongs to the
 domain of attraction of a bivariate extreme value distribution with
 independent marginals, i.e.  $X$ and $Y$ are asymptotically
 independent.

\end{prop}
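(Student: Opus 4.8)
The plan is to establish asymptotic independence by showing that the rescaled joint tail charges no mass on the open quadrant, i.e. that for all $x_1,x_2>0$,
\[
  \lim_{t\to\infty} t\, \pr\bigl(X>b_X(t/x_1)\,,\,Y>b_Y(t/x_2)\bigr) = 0 \; .
\]
Both margins lie in a univariate max-domain of attraction (the variable $X$ because~(\ref{eq:loi-limite}) holds, and $Y$ by hypothesis), so the displayed limit says exactly that the exponent measure of $(X,Y)$ concentrates on the coordinate axes; this simultaneously yields membership in a bivariate max-domain of attraction and independence of the limiting margins. Hence it suffices to prove the display above.

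To do so I would condition on $X$. Setting $s=t/x_1$ and using $\pr(X>b_X(s))=1/s$,
\[
  t\,\pr\bigl(X>b_X(t/x_1),\,Y>b_Y(t/x_2)\bigr)
  = x_1\,\pr\bigl(Y>b_Y(x_1 s/x_2)\mid X>b_X(s)\bigr) \; ,
\]
so it is enough to show this conditional probability tends to $0$. By~(\ref{eq:loi-limite}) read at $x=\infty$, conditionally on $X>b_X(s)$ the variable $(Y-m(b_X(s)))/a(b_X(s))$ converges weakly to the cdf $K(\infty,\cdot)$, which is proper because $K$ has non-degenerate margins. Writing the $Y$-threshold in these conditional units gives
\[
  w_s := \frac{b_Y(x_1 s/x_2)-m(b_X(s))}{a(b_X(s))} \; ,
\]
and the whole matter reduces to proving $w_s\to+\infty$: a weak limit together with a diverging threshold forces the conditional tail to vanish, since for any continuity point $M$ of $K(\infty,\cdot)$ one has $\limsup_s \pr\bigl((Y-m)/a>w_s\mid X>b_X(s)\bigr)\le 1-K(\infty,M)$, and letting $M\to\infty$ closes the argument.

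The divergence $w_s\to+\infty$ is where Assumption~(\ref{eq:condition-AI}) enters, and verifying it uniformly in $x_1,x_2$ is the main obstacle. Since $Y\in\mathrm{MDA}(G_\gamma)$ with auxiliary function $\psi_Y$, the tail-quantile function $b_Y$ admits the expansion $b_Y(x_1 s/x_2)=b_Y(s)+\psi_Y(b_Y(s))\,(y^\ast+o(1))$ with $y^\ast=((x_1/x_2)^\gamma-1)/\gamma$ (read as $\log(x_1/x_2)$ when $\gamma=0$). Substituting this into $w_s$ turns it into the ratio appearing in~(\ref{eq:condition-AI}) with $y=y^\ast$, up to an error $\psi_Y(b_Y(s))\,o(1)/a(b_X(s))$. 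To control the error I would sandwich: because $\psi_Y>0$, for $s$ large the coefficient $y^\ast+o(1)$ lies in $(y^\ast-1,y^\ast+1)$, whence
\[
  \frac{b_Y(s)-m(b_X(s))+\psi_Y(b_Y(s))(y^\ast-1)}{a(b_X(s))} \le w_s \; ,
\]
and the left-hand side tends to $+\infty$ by~(\ref{eq:condition-AI}) applied at $y=y^\ast-1$. This is precisely why the hypothesis is needed \emph{for all} $y$ rather than at a single level: varying $x_2$ shifts the $Y$-level, replacing $b_X(t)$ by $b_X(t/x_1)$ is reabsorbed (through $s=t/x_1$) into a further $\psi_Y$-shift of the $Y$-quantile, and the residual $o(1)$ is dominated by the monotone sandwich. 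Combining the three displays shows the joint tail is negligible for every $x_1,x_2>0$, hence $X$ and $Y$ are asymptotically independent.
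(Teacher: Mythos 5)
Your proof is correct and follows essentially the same route as the paper's: both reduce asymptotic independence to the vanishing of the scaled joint exceedance probability, rewrite the $Y$-threshold in the CEV scale $(m\circ b_X,\, a\circ b_X)$, invoke~(\ref{eq:condition-AI}) to show the rescaled coordinate diverges, and conclude from~(\ref{eq:loi-limite}) by pushing the auxiliary level to infinity. The only difference is your choice of thresholds $b_Y(t/x_2)$ in place of the paper's $b_Y(t)+\psi_Y\circ b_Y(t)\,y$, which forces the extra (standard) expansion $b_Y(x_1 s/x_2)=b_Y(s)+\psi_Y(b_Y(s))(y^\ast+o(1))$ and the sandwich step, whereas the paper's parametrization lets hypothesis~(\ref{eq:condition-AI}) apply verbatim.
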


\begin{proof}
  To prove asymptotic independence, we must show that
\begin{align} \label{eq:AI}
  \lim_{t\to\infty} t \pr(X & > b_X(t) + \psi\circ b_X(t) x \;, \; Y >
  b_Y(t) + \psi_Y\circ b_Y(t)y ) = 0 \; ,
\end{align}
where  $\psi$ is as
in~(\ref{eq:loi-limite}). For any $y\in\mathbb{R}$, define $\tilde y$ by
$b_Y(t) + \psi_Y\circ b_Y(t)y = m\circ b_X(t) + a\circ b_X(t) \tilde
y$, i.e.
\begin{align*}
  \tilde y & = \frac{b_Y(t) - m \circ b_X(t) + \psi_Y\circ b_Y(t)y}
  {a\circ b_X(t)} \; .
\end{align*}
Thus, by~(\ref{eq:condition-AI}), for any $z\in\mathbb{R}$, we have,
\begin{align*}
  \limsup_{t\to\infty} & \; t \pr(X > b_X(t) + \psi\circ b_X(t) x \;,
  \; Y >  b_Y(t) + \psi_Y\circ b_Y(t)y ) \\
  & =  \limsup_{t\to\infty}  \; t \pr(X > b_X(t) + \psi\circ b_X(t) x \;,
  \; Y >  m\circ b_X(t) + a\circ b_X(t) \tilde y ) \\
  & \leq \limsup_{t\to\infty} t \pr(X > b_X(t) + \psi\circ b_X(t)
  x \;, \; Y > m \circ  b_X(t) + a \circ b_X(t) z ) \\
  & = \pr(X^*>x \;, \ Y^*>z) \; , 
\end{align*}
where $(X^*,Y^*)$ is a random vector with distribution $K$.  This can
be made arbitrarily small by choosing $z$ large enough, so
(\ref{eq:AI}) holds.
\end{proof}
A simple example is provided by the elliptical distributions with identical margins, for which one has:
$b_X=b_Y$, $\psi_Y(x)=o(x), \psi_Y(x)=o(a(x))$ and $m(x)=\rho x$, so that~(\ref{eq:condition-AI}) holds. A more general result is provided by the following corollary. 
\begin{coro} \label{coro:CEV-AI}
  Under the assumptions of Theorem~\ref{theo:r-t}, if moreover $Y$
  belongs to a domain of attraction, then it is the Gumbel law,
  and~(\ref{eq:condition-AI}) holds, so that  $X$ and $Y$ are
  asymptotically independent.
\end{coro}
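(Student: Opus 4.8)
The plan is to verify the hypotheses of Proposition~\ref{prop:CEV-AI} under the assumptions of Theorem~\ref{theo:r-t}, since that proposition already delivers the conclusion of asymptotic independence. The theorem gives us the CEV convergence~(\ref{eq:deltakappatau}), which is exactly~(\ref{eq:loi-limite}) with $m(x)=\rho x$ and $a(x)=x h(\psi(x)/x)$. So the two things I must establish are: first, that if $Y$ lies in \emph{any} max-domain of attraction then the index must be $\gamma=0$ (the Gumbel case); and second, that the divergence condition~(\ref{eq:condition-AI}) holds with these particular $m$ and $a$.

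For the first point I would invoke the tail comparison~(\ref{eq:equiv-b}), namely $b_Y(t)\sim v^* b(t)$ where $b$ is the inverse of $1/\bar H$, which is stated in the theorem's comments and proved in Section~\ref{sec:lemmes}. Since $R$ is in the Gumbel domain, $b$ is $\Pi$-varying (slowly varying increments) and its associated tail has unbounded support; because $Y$ has a tail lighter than $v^*R$ yet still unbounded support, its tail quantile $b_Y$ inherits the slow growth characteristic of the Gumbel domain rather than the polynomial growth of the Fr\'echet domain or the boundedness of the Weibull domain. Hence if $Y$ belongs to a domain of attraction at all, the index is forced to be $\gamma=0$, and its auxiliary function $\psi_Y$ satisfies $\psi_Y(x)=o(x)$.

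The heart of the argument is checking~(\ref{eq:condition-AI}), i.e. that
\begin{gather*}
  \frac{b_Y(t) - \rho\, b_X(t) + \psi_Y\circ b_Y(t)\,y}{a\circ b_X(t)} \to +\infty \; .
\end{gather*}
From part~(i) of Theorem~\ref{theo:r-t} we have $\pr(X>x)\sim k(\psi(x)/x)\bar H(x)$, so $b_X(t)\sim b(t)$ up to the slowly varying correction coming from $k$; combining with~(\ref{eq:equiv-b}) gives $b_Y(t)\sim v^* b_X(t)$ up to such corrections. I would treat separately the cases $v^*>\rho$ and $v^*=\rho$. When $v^*>\rho$ the numerator behaves like $(v^*-\rho)b_X(t)$, a positive multiple of $b_X(t)$, while the denominator $a(b_X(t))=b_X(t)\,h(\psi(b_X(t))/b_X(t))$ is $o(b_X(t))$ because $h$ is regularly varying with positive index $\delta/\kappa$ and its argument tends to $0$; the ratio therefore diverges, and the $\psi_Y$ term, being of order $\psi_Y(b_Y(t))=o(b_Y(t))=o(b_X(t))$, does not disturb this. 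The delicate subcase is $v^*=\rho$, where the leading terms cancel and one must control the next-order difference $b_Y(t)-\rho\,b_X(t)$; here I would exploit the $\Pi$-variation of $b$ together with the precise second-order information in~(\ref{eq:equiv-b}) to show this difference still dominates $a(b_X(t))$.

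The main obstacle I anticipate is precisely this boundary subcase $v^*=\rho$: asymptotic independence could conceivably fail if the tail of $Y$ were achieved along directions where $v$ attains $v^*=\rho$ at exactly the conditioning point, and the theorem's comment~(iv) explicitly warns that nothing is assumed about the behaviour of $v$ and $g$ near the maximum of $v$. I would argue that because $u$ attains its strict maximum $1$ only at $t_0$ while the growth of $a$ is governed solely by the local index $\delta/\kappa<1$, the normalization $a\circ b_X$ is genuinely smaller than any separation in the numerator that an unbounded-support $Y$ must produce; making this rigorous without extra hypotheses on $v$ is the crux, and it is here that the strict inequality $\delta<\kappa$ and the sublinearity $a(x)=o(x)$ are essential.
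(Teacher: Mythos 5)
Your strategy is the paper's: feed the conclusion of Theorem~\ref{theo:r-t} into Proposition~\ref{prop:CEV-AI} with $m(t)=\rho t$ and $a(t)=t\,h(\psi(t)/t)$, use~(\ref{eq:uppertail-x}) to get $b_X(t)\sim b(t)$ and~(\ref{eq:equiv-b}) to get $b_Y(t)\sim v^*b(t)$, rule out the Fr\'echet domain (the tail of $Y$ is dominated by the rapidly varying tail of $v^*R$) and the Weibull domain (unbounded support), and then show the numerator of~(\ref{eq:condition-AI}) is of order $(v^*-\rho)b(t)$ while $a\circ b_X(t)=o(b_X(t))$ and $\psi_Y\circ b_Y(t)=o(b_Y(t))$. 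Your treatment of the case $v^*>\rho$ is complete and coincides with the paper's proof.

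The gap is the ``delicate subcase'' $v^*=\rho$, which you leave unresolved and explicitly describe as the crux of the matter. That subcase is vacuous: Assumption~\ref{hypo:u-v}\ref{item:hypo-v} requires $v$ to be \emph{strictly increasing} in a neighborhood of $t_0$ with $v(t_0)=\rho$, so $v^*=\max_{t\in[0,1]}v(t)\geq v(t_0+\epsilon')>v(t_0)=\rho$ for small $\epsilon'>0$. The paper dispatches this in one sentence (``the assumptions on $v$ imply that $v^*>\rho$''), and with that observation your argument closes immediately. Two further points. First, the warning in comment (iv) that you invoke---that nothing is assumed about $v$ and $g$ near the maximum of $v$---is beside the point here: what forces $v^*>\rho$ is the behaviour of $v$ at $t_0$ (the argmax of $u$), which \emph{is} pinned down by the assumptions; the comment only prevents one from obtaining a precise tail asymptotic for $Y$ analogous to~(\ref{eq:uppertail-x}). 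Second, the repair you sketch for $v^*=\rho$ could not work as stated: (\ref{eq:equiv-b}) is a first-order equivalence $b_Y(t)\sim v^*b(t)$ and carries no second-order information about the difference $b_Y(t)-\rho\,b_X(t)$, so there is nothing there to exploit; it is fortunate that the case never arises.
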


\begin{proof}
  Under the assumptions of Theorem~\ref{theo:r-t}, the normalizing
  functions in~(\ref{eq:loi-limite}) satisfy $m(t) = \rho t$, $\psi(t)
  = o(a(t))$ and $a(t) = o(t)$. Since $R$ has unbounded support,
  then so has $Y$ and since $Y$ has lighter tails than $v^*R$, the 
  max-domain of attraction of $Y$ can only be the Gumbel law. Thus it
  also holds that $\psi_Y(t) = o(t)$.  By~(\ref{eq:equiv-b}) we know
  that $b_Y(t) \sim v^*b(t)$ and~(\ref{eq:uppertail-x}) implies that
  $b_X(t)\sim b(t)$.  Thus we have
  \begin{gather*}
    \frac{b_Y(t) - m \circ b_X(t) + \psi_Y\circ b_Y(t)y} {a\circ
      b_X(t)} \sim \frac{(v^*-\rho) b(t)}{a \circ b_X(t)}.
  \end{gather*}
The assumptions on $v$ imply that $v^*>\rho$, so 
\begin{gather*}
  \lim_{t\to\infty} \frac{(v^*-\rho) b(t)}{a \circ b_X(t)} = \infty \; .
\end{gather*}
Thus~(\ref{eq:condition-AI}) holds for all $y\in\mathbb{R}$. 
\end{proof}

\paragraph{Remark} \citet[Proposition~4.1]{das:resnick:2008}
show the same result under a condition which can be expressed with the
present notation as $\lim_{t\to\infty} \psi_Y \circ b_Y(t)/a\circ
b_X(t) \in(0,\infty]$. Under the conditions of Theorem~\ref{theo:r-t},
if moreover $v$ and $g$ satisfy some smoothness assumptions around the
maximum of $v$ similar to Assumptions~\ref{hypo:u-v}\ref{item:hypo-u}
and~\ref{hypo:T}, it can be shown that $\psi_Y \sim \psi$ and thus
$\lim_{t\to\infty} \psi_Y \circ b_Y(t)/a\circ b_X(t) =0$, so that
\citet[Proposition~4.1]{das:resnick:2008} cannot be applied here. 

\subsection{Some  applications  of Theorem~\ref{theo:r-t}}
\label{sec:examples}
  
  We now give some examples of applications of Theorem~\ref{theo:r-t}.

\begin{xmpl} \label{xmpl:pareil}
  If the density $g$ of the variable $T$ has a positive limit at
  $t_0$, then $\tau=0$. If $u$ is twice differentiable with $u'(t_0)
  =0$ and $u^{\prime\prime}(t_0)\ne0$, then $\kappa = 2$ and if
  $v'(0)>0$, then $\delta = 1$. If these three conditions hold, the
  limiting distribution is the standard Gaussian and the normalization
  is $\sigma\sqrt{x\psi(x)}$.  Figure~\ref{fig:ellipse-flat} also
  illustrates this case. As will be shown in
  Section~\ref{sec:relation}, this is actually a particular case of
  Theorem~\ref{theo:B-E}.
\end{xmpl}

\begin{xmpl}
  \label{xmpl:p-elliptique} \cite{hashorva:kotz:kume:2007} have
  introduced a generalisation of the elliptical distributions, which
  they called $L_p$-Dirichlet distributions for all $p>0$. We consider
  here the case $p>1$.  Instead of being ellipses, the level lines of
  the density of these distributions have the following equation:
  \begin{gather} \label{eq:p-ellipses}
    |x|^p + \frac{|y-\rho x|^p}{1-|\rho|^p} =  1 \; ,
  \end{gather}
  with $\rho\in(-1,1)$. See Figure~\ref{fig:mou-tordu}. To simplify
  the discussion, we consider the case $\rho=0$.  An admissible
  parametrization is given by $u(t) = (1-t^p)^{1/p}$ and $v(t) = t$,
  which yields $\delta = 1$ and $\kappa = p$. Thus,
  Assumption~\ref{hypo:p-rond-n} does not hold except if $p=2$, which
  is the elliptical context. If the density of $T$ has a positive
  limit at zero, then $\tau=0$ and the cdf of the limiting
  distribution is then
\begin{gather*}
  H_{p,1}(y) = \frac{ \int_{-\infty}^y \mathrm{e}^{-|s|^p/p} \, \mathrm{d} s } {2
    p^{1/p-1} \, \Gamma(1/p) } \; .
\end{gather*}
\end{xmpl}


\begin{figure}[htbp]
  \centering \includegraphics[width=6cm]{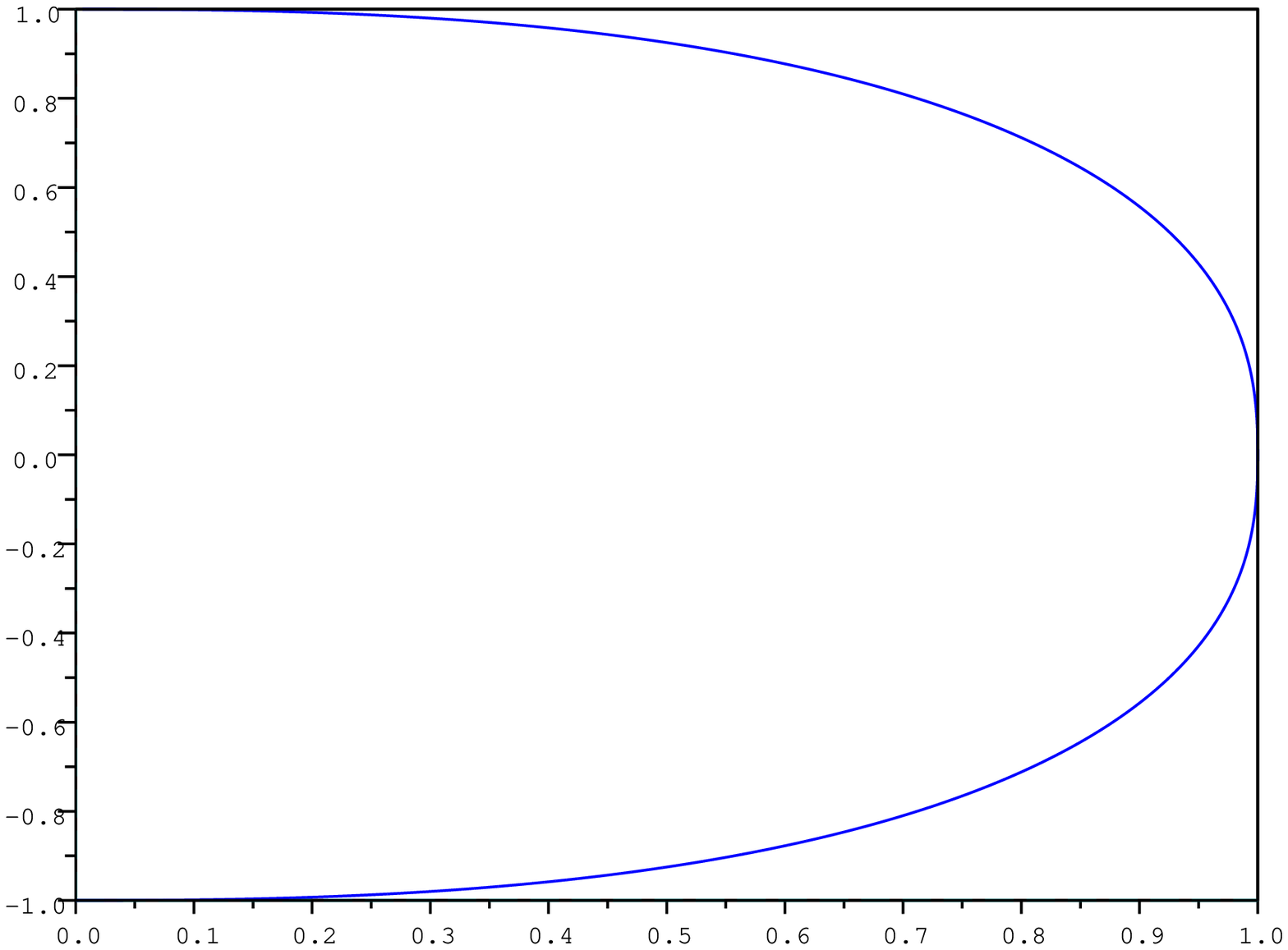}
  \includegraphics[width=6cm]{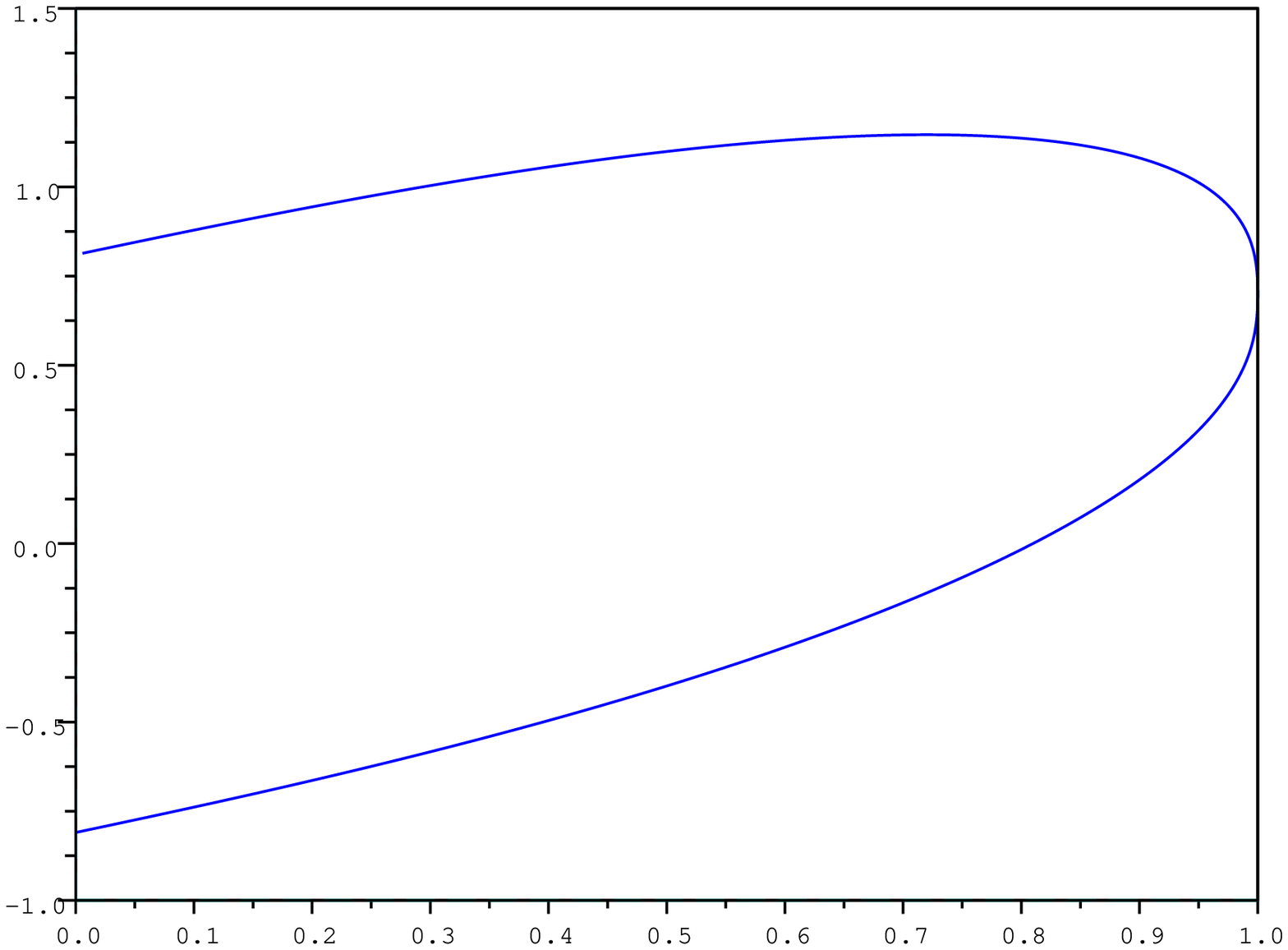}
  \caption{The curve given by Equation~(\ref{eq:p-ellipses})
    with $p=3/2$, $\rho=0$ (left) and $\rho=.7$ (right).}
  \label{fig:mou-tordu}
\end{figure}

As a last remark in this section, note that the
result~(\ref{eq:uppertail-x}) on the upper tail of $X$ is actually a
particular case of a more general result on the tail of the product of
a random variable $X$ in the domain of attraction of the Gumbel law
and a bounded random variable $U$.  Similar results exist for
heavy-tailed or subexponential distributions, for instance the
celebrated Breiman's Lemma (see \cite{breiman:1965} and
\cite{cline:samorodnitsky:1994}). In the present context,
\citet[Theorem~3]{hashorva:2008polar} states a version of this result
with $U = \cos\Theta$ and $\Theta$ has a density on $(-\pi,\pi)$. We
state it under slightly more general assumptions which highlight the
fact that only the behaviour of $U$ near its maximum must be
specified.

\begin{prop}
  Let $R$ be a nonnegative random variable whose cdf $H$ is in the
  domain of attraction of the Gumbel law.  Let $U$ be a nonnegative
  random variable, independent of $R$, such that $U \leq b < \infty$
  a.s. and that admits a density $g$ in a neighborhood of $b$ which is
  regularly varying at $b$ with index $\tau>-1$. Then $RU$ {
    satisfies~(\ref{eq:rapvar})} with auxiliary function $\psi(x/b)$
  and
\begin{align}
  \pr(RU>x) \sim b^2\Gamma(\tau+1) \frac{\psi(x/b)}x \,
  g(\{1/b+\psi(x/b)/x\}^{-1}) \bar H(x/b) \; . \label{eq:survie-RU}
\end{align}
\end{prop}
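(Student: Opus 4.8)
The plan is to start from the independence of $R$ and $U$, which gives the elementary representation
\[
  \pr(RU>x)=\int_0^b \bar H(x/s)\,g(s)\,\mathrm{d} s \; ,
\]
and to show that, as $x\to\infty$, the whole mass of this integral concentrates in an arbitrarily small left neighbourhood of the endpoint $s=b$, which is where $U$ attains its maximum. Since $\bar H$ is nonincreasing, for any fixed $\eta\in(0,b)$ the contribution of $s\in(0,b-\eta)$ is at most $\bar H(x/(b-\eta))$. Because $x/(b-\eta)-x/b$ is a fixed positive multiple of $x$, hence much larger than $\psi(x/b)=o(x)$, the $\Gamma$-variation of $\bar H$ forces $\bar H(x/(b-\eta))/\bar H(x/b)$ to decay faster than any power of $\psi(x/b)/x$. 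As the announced equivalent~(\ref{eq:survie-RU}) is only \emph{polynomially} small relative to $\bar H(x/b)$ (through the factor $\psi(x/b)/x$ and through $g(\cdots)$, which is regularly varying and thus of power order $\tau$, with $1+\tau>0$), this outer contribution is negligible.

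On the range $s\in(b-\eta,b)$ I would perform the change of variable defined by $x/s=x/b+\psi(x/b)\,t$, i.e. $s=s(t)=b/(1+b\psi(x/b)t/x)$, which maps $s=b$ to $t=0$ and $s=b-\eta$ to some $T(x)\to\infty$. Differentiation gives $\mathrm{d} s=-s(t)^2\,(\psi(x/b)/x)\,\mathrm{d} t$, while $b-s(t)\sim t\,b^2\psi(x/b)/x$ for each fixed $t$. Factoring out $(\psi(x/b)/x)\,\bar H(x/b)\,g(s(1))$, with $s(1)=\{1/b+\psi(x/b)/x\}^{-1}$, the remaining integral becomes
\[
  \int_0^{T(x)} \frac{\bar H(x/b+\psi(x/b)t)}{\bar H(x/b)}\,
  \frac{g(s(t))}{g(s(1))}\,s(t)^2\,\mathrm{d} t \; .
\]
For each fixed $t$ the three factors converge: the first to $\mathrm{e}^{-t}$ by the $\Gamma$-variation of $\bar H$; the second to $t^{\tau}$ because $b-s(t)\sim t\,(b-s(1))$ and $u\mapsto g(b-u)$ is regularly varying at $0$ with index $\tau$; and $s(t)^2\to b^2$. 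Hence the integrand tends pointwise to $b^2\mathrm{e}^{-t}t^{\tau}$.

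The only genuinely delicate point, which I expect to be the main obstacle, is justifying the interchange of limit and integral by dominated convergence. For the $\Gamma$-varying factor one needs a uniform bound of the form $\bar H(x/b+\psi(x/b)t)/\bar H(x/b)\le C\,\mathrm{e}^{-(1-\varepsilon)t}$ valid for all $t\ge0$ and all large $x$; for the density ratio one uses a Potter bound $g(s(t))/g(s(1))\le C\max(t^{\tau-\varepsilon},t^{\tau+\varepsilon})$ coming from the regular variation of $g$ near $b$, together with $s(t)^2\le b^2$. The resulting product is dominated by an integrable function \emph{precisely} because $\tau>-1$ secures integrability near $t=0$ (where $t^{\tau-\varepsilon}$ is integrable for $\varepsilon$ small), while the exponential factor dominates the polynomial growth as $t\to\infty$. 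Dominated convergence then yields $\int_0^\infty b^2\mathrm{e}^{-t}t^{\tau}\,\mathrm{d} t=b^2\Gamma(\tau+1)$, which together with the localization step gives exactly~(\ref{eq:survie-RU}).

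Finally, the fact that $RU$ satisfies~(\ref{eq:rapvar}) follows from the explicit equivalent~(\ref{eq:survie-RU}): in the ratio $\pr(RU>x+\psi(x/b)t)/\pr(RU>x)$ the prefactors $\psi(\cdot/b)/(\cdot)$ and $g(\cdots)$ are asymptotically unchanged, by the self-neglecting property of $\psi$ and the slow variation carried by $g$, so that the ratio is asymptotically that of the $\bar H(\cdot/b)$ terms, which converges to an exponential by the $\Gamma$-variation of $\bar H$; the auxiliary function is thus read off from this dominant factor as a multiple of $\psi(\cdot/b)$, as claimed.
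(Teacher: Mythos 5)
Your route is the same as the paper's: the paper proves this proposition in one line, by pointing to the proof of~(\ref{eq:uppertail-x}) (localize $\int \bar H(x/s)g(s)\,\mathrm{d} s$ near the maximum of $U$, then change variables so that the integrand involves $\bar H(x/b+\psi(x/b)t)/\bar H(x/b)$) together with Lemma~\ref{lem:convunifcompact}, which packages exactly the limit--integral interchange that you carry out by hand. Your localization, your change of variables with Jacobian $s(t)^2\psi(x/b)/x$, your pointwise limit $b^2\mathrm{e}^{-t}t^{\tau}$, and your closing derivation of~(\ref{eq:rapvar}) from~(\ref{eq:survie-RU}) all match that computation (two small points: the assumption only provides a density of $U$ near $b$, so write $\pr(RU>x)=\pr(RU>x,\,U\le b-\eta)+\int_{b-\eta}^{b}\bar H(x/s)g(s)\,\mathrm{d} s$ rather than integrating $g$ over all of $(0,b)$, your bound $\bar H(x/(b-\eta))$ covering the first term anyway; and the constant multiple of $\psi(\cdot/b)$ serving as auxiliary function is forced to be $b$).

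There is, however, one genuine error, and it sits precisely at the step you identify as the delicate one. A uniform bound $\bar H(x+\psi(x)t)/\bar H(x)\le C\,\mathrm{e}^{-(1-\varepsilon)t}$, valid for all $t\ge 0$ and all large $x$, does \emph{not} hold for general $\Gamma$-varying tails: $\Gamma$-variation gives exponential decay only locally uniformly in $t$. For instance, take $\bar H(x)=\exp\{-(\log x)^2/2\}$ with $\psi(x)=x/\log x$; writing $L=\log x$, one has $\bar H(x+\psi(x)t)/\bar H(x)=\exp\{-L\log(1+t/L)-\tfrac12\log^2(1+t/L)\}$, which at $t=L^2$ equals roughly $\mathrm{e}^{-L\log L}$, enormously larger than $\mathrm{e}^{-(1-\varepsilon)L^2}$. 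So a proof that ``needs'' this exponential domination fails. What is true, and what the paper uses (it is exactly~(\ref{eq:borne-afg}) in Lemma~\ref{lem:borne-afg}, invoked inside Lemma~\ref{lem:convunifcompact}), is the bound $\bar H(x+\psi(x)t)/\bar H(x)\le C(1+t)^{-p}$ for any fixed $p>0$, uniformly in $t\ge0$ for large $x$. This polynomial bound of arbitrarily high order is all your argument requires: combined with your Potter bound $C\max(t^{\tau-\varepsilon},t^{\tau+\varepsilon})$ and $s(t)^2\le b^2$, choosing $p>\tau+\varepsilon+1$ produces an integrable dominating function, and the rest of your proof goes through unchanged. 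With that single substitution your proposal becomes a correct, self-contained version of the paper's argument.
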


\begin{proof}
  The proof is along the lines of the proof of~(\ref{eq:uppertail-x}),
  and makes use of Lemma~\ref{lem:convunifcompact}.
\end{proof}

\subsection{Relation between Theorems~\ref{theo:B-E} and~\ref{theo:r-t}}
\label{sec:relation}

Let $X,Y$ be random variables whose joint density $f$ can be expressed
as
\begin{gather}
  f(x,y) = g \circ n(x,y) \; ,   \label{eq:g-rond-n}
\end{gather}
where $g$ is a nonnegative function on $\mathbb{R}_+$ such that
$\int_0^\infty r g(r)\, \mathrm{d} r < \infty$, $n$ is a positively
homogeneous function with index~1 and the level line $n(x,y)=1$ admits
the parametrization $t\to(u(t),v(t))$, $t\in[0,1]$.  The change of
variable $x=ru(t)$, $y=rv(t)$ yields, for any bounded measurable
function~$\varphi$:
\begin{gather*}
  \esp[\varphi(X,Y)] = \int_0^\infty \int_0^1 \varphi(ru(t),rv(t)) \, r g(r) \,
  |u(t)v'(t) - u'(t)v(t)| \, \mathrm{d} r \, \mathrm{d} t \; .
\end{gather*}
Hence $(X,Y) = R(u(T),v(T))$, where $R$ has density $c(g)^{-1} r
g(r)$, with $c(g) = \int_0^\infty r g(r) \, d r$ and $T$ has density
$c(u,v)^{-1} |uv' - u'v|$ with $c(u,v) = \int_0^1 |u'(t)v(t) - u(t)
v'(t)| \mathrm{d} t$.

Conversely, if $(X,Y) = R(u(T),v(T))$ where $R$ and $T$ are
independent, $R$~has a density $h$ on $[0,\infty)$, and if there
exists a point $t_0$ such that $u'(t_0)=0$,
$u^{\prime\prime}(t_0)>0$ and $v'(t_0)>0$, then the function $v/u$
is invertible on an interval
around $t_0$. Let $\phi$ be its inverse, and define $n(x,y) =
x/\{u\circ \phi(y/x)\}$. Without loss of generality, we can assume
that $u(t_0) = 1$ and denote $v(t_0) = \rho$. Then $n$ is a
positively homogeneous function with index~1, and if $T$ admits a
density $g$, then $(X,Y)$ has a density $f$ in a cone $C = \{(x,y)
\mid (\rho-\epsilon)x \leq y \leq (\rho+\epsilon)x\}$ around the
line $y=\rho x$, defined by
\begin{gather*}
  f(x,y) = \frac{h(n(x,y))}{n(x,y)} \; L(x,y) \; ,
\end{gather*}
with
\begin{gather*}
  L(x,y) = \frac{ g\circ \phi(y/x)}{|u'v-uv'|\circ\phi(y/x)} \; .
\end{gather*}
If the density $g$ and the Jacobian $|u'v-uv'|$ are both positive and
continuous at $t_0$, then the function $L$ belongs to the class ${\cal L}$. See
Lemma~\ref{lem:plate} for a proof. Thus Assumption~\ref{hypo:p-rond-n}
locally holds, and Theorem~\ref{theo:B-E} implies
Theorem~\ref{theo:r-t} in this context.

\subsection{Second order correction}
As illustrated in \cite{abdous:fougeres:ghoudi:soulier:2008}, it is
useful for statistical purposes to have a second order correction to
the asymptotic approximation (\ref{eq:deltakappatau}) provided by Theorem~\ref{theo:r-t}.  In
order to obtain such a refinement, rates of convergence in all the
approximations used to prove Theorem~\ref{theo:r-t} are needed. To
simplify the discussion, we will consider the following additional
assumptions.
\begin{itemize}
\item The random variable $T$ is uniformly distributed over $[0,1]$.
\item There exist $\lambda>0$ and $\sigma>0$ such that
   \begin{gather*}
    u(t_0+t) = 1 - \frac{ \sigma^2 t^2}2 + o(t^3) \; , \ \
    v(t_0+t) = \rho + \lambda  t + O(t^2) \; .
 \end{gather*}
\item There exist functions $\chi$ and $B$ such that
  \begin{gather} \label{eq:secondordre}
    \left| \frac{\bar H(x+t\psi(x))}{\bar H(x)} - \mathrm{e}^{-t} \right|
    \leq \chi(x) B(t) \; ,
  \end{gather}
  for all $t\geq 0$ and $x$ large enough, where $\lim_{x\to\infty}
  \chi(x) = 0$, and $B$ is bounded on the compact subsets of
  $[0,\infty)$ and integrable over $[0,\infty)$.
\end{itemize}
The bound~(\ref{eq:secondordre}) is a nonuniform rate of convergence.
See \citet[section 2.2]{abdous:fougeres:ghoudi:soulier:2008} for examples.
Under these assumptions, it is possible to obtain a rate of
convergence and a second order correction in Theorem~\ref{theo:r-t}.
Proceeding as in the proof of
\citet[Theorem~3]{abdous:fougeres:ghoudi:soulier:2008} yields
\begin{multline*}
  \pr( Y \leq \rho x + \frac\lambda\sigma \sqrt{x \psi(x)} z \mid X>x)
  \\
  = \Phi(z) - \frac\rho\lambda \sqrt{\frac{\psi(x)}x} \phi(z) +
  O(\chi(x)) + o\left( \sqrt{\psi(x)/x}\right) \; .
\end{multline*}
Replacing $z$ by $z+\rho\lambda^{-1}\sqrt{\psi(x)/x}$ yields
 \begin{multline*}
   \pr\left( Y \leq \rho x + \frac\lambda\sigma \sqrt{x \psi(x)} z +
     \frac\rho\sigma \psi(x)\mid X>x\right) \\
   = \Phi(z)+ O(\chi(x)) + o\left( \sqrt{\psi(x)/x}\right) \; .
\end{multline*}
This second order correction is meaningful only if $\chi(x) =
o(\sqrt{\psi(x)})$. The improvement is only moderate here: the bound is
$o(\sqrt{\psi(x)/x})$ instead of $O(\sqrt{\psi(x)/x})$, because we
assumed only that $u(t_0+t) = 1 - \sigma^2 t^2/2 + o(t^3)$. If the
expansion of $u$ around $t_0$ is $u(t_0+t) = 1 - \sigma^2 t^2/2 +
O(t^4)$, then the bound becomes $O(\psi(x)/x)$. This is the case for
bivariate elliptical distributions.

\section{Proof of Theorem~\ref{theo:r-t}}
\label{sec:preuve}
Let $\epsilon$ be defined as in Assumption~\ref{hypo:u-v}.
Since $u$ has its maximum at $t_0$, there exists
$\eta>0$ such that for all $t\notin [t_0-\epsilon,t_0+\epsilon]$, it
holds that $u(t) \leq 1-\eta$.  Then,
  \begin{align*}
    \pr(X>x \; ; \ Y>y) & = \int_0^1 \bar H\left( \frac x{u(t)} \vee
      \frac y{v(t)} \right)  g(t) \, \mathrm{d} t \\
    & = \int_{|t-t_0| \leq \epsilon} \bar H\left( \frac x{u(t)} \vee
      \frac y{v(t)} \right) g(t) \, \mathrm{d} t \\
    & + \int_{|t-t_0| > \epsilon} \bar H\left( \frac x{u(t)} \vee
      \frac y{v(t)} \right) g(t) \, \mathrm{d} t \; .
  \end{align*}
  Let $r(x)$ denote the last term. The bound~(\ref{eq:reste}) in
  Lemma~\ref{lem:borne-afg} yields that for any $p>0$,
  \begin{align*}
    r(x) \leq \bar H(x/(1-\eta)) = O\left( \{\psi(x)/x\}^p \bar H(x)
    \right) \; .
  \end{align*}
  This will prove that $r(x)$ is negligible with respect to the first
  integral for which we now give an asymptotic equivalent.  By
  assumption, we can choose $\epsilon$ such that the function $v/u$ is
  continuous and increasing on $[t_0-\epsilon,t_0+\epsilon]$, because
  $\delta<\kappa$, so that $v/u \sim v$ in a neighborhood of $t_0$.

  If $y$ can be expressed as $y=\rho x+o(x)$, then for large $x$, it
  holds that $y/x \in [(v/u)(t_0-\epsilon),(v/u)(t_0+\epsilon)]$. If
  $y/x > \rho = (v/u)(t_0)$, then there exists $t_1 \in
  [t_0,t_0+\epsilon]$ such that $(v/u)(t_1) = y/x$.  Thus,
  \begin{multline*}
    \int_{t_0-\epsilon}^{t_0+\epsilon} \bar H\left( \frac x{u(t)} \vee
      \frac y{v(t)} \right) g(t) \, \mathrm{d} t \\
    = \int_{t_0-\epsilon}^{t_1} \bar H\left( \frac y{v(t)} \right)
    g(t) \, \mathrm{d} t + \int_{t_1}^{t_0+\epsilon} \bar H\left( \frac
      x{u(t)}  \right) g(t) \, \mathrm{d} t \; .
  \end{multline*}
  Let $I$ and $J$ denote the last two integrals, respectively. The
  successive changes of variables $s=1/u(t)$ and  $s=1+\omega\psi(x)/x$ yield
  \begin{multline*}
    J = \int_{1/u(t_1)}^{1/u(t_0+\epsilon)} \bar H(xs)
    \frac{-(u^\leftarrow)'(1/s)}{s^2}
    g(u^\leftarrow(1/s)) \, \mathrm{d} s \\
    = \frac{\psi(x)}x
    \int_{x\{1/u(t_1)-1\}/\psi(x)}^{x\{1/u(t_0+\epsilon)-1\}/\psi(x)}
    \bar H(x+\psi(x)\omega) \\
    \times
    \frac{-(u^\leftarrow)'(1/\{1+\omega\psi(x)/x\})}{\{1+\omega\psi(x)/x\}^2}
    g(u^\leftarrow(1/\{1+\omega\psi(x)/x\})) \, \mathrm{d} \omega \; .
  \end{multline*}
Let $k_1$ denote the function defined by
\begin{align*}
  k_1(\omega) = -\omega (u^\leftarrow)'(1/\{1+\omega\})
  g(u^\leftarrow(1/\{1+\omega\})) \; .
\end{align*}
Assumptions~\ref{hypo:u-v} and~\ref{hypo:T} imply that $k_1$ is
regularly varying at zero with index $(1+\tau)/\kappa$ and
Lemma~\ref{lem:convunifcompact} yields
\begin{align*}
  J \sim k_1\{\psi(x)/x\} \bar H (x) \int_{z_0}^\infty \mathrm{e}^{-t}
  t^{(1+\tau)/\kappa-1} \, \mathrm{d} t \; ,
\end{align*}
where $z_0 = x\{1/u(t_1)-1\}/\psi(x)$ and if $y$ is chosen in such a
way that $z_0$ has a finite limit when $x\to\infty$.  Set
$y/x=\rho+\xi$. Then, by definition of $t_1$, $z_0 = x\{1/u\circ
(v/u)^\leftarrow(\rho+\xi)-1\}/\psi(x)$.  Assumption~\ref{hypo:u-v} implies
that the function $\xi \mapsto u\circ(v/u)^\leftarrow(\rho+\xi)$ is
regularly varying at zero with index $\kappa/\delta$. Define an
increasing function $h$ which is regularly varying at zero with index
$\delta/\kappa$ by
\begin{gather*}
  h(x) = (v/u)\circ u^\leftarrow (1/\{1+x\}) - \rho = (1+x)v\circ
  u^\leftarrow (1/\{1+x\}) - \rho \; .
\end{gather*}
For $z\geq0$, set $y/x = \rho + h(\psi(x)/x)z$.  Then
\begin{gather*}
z_0 = \frac{x}{\psi(x)} h^\leftarrow (h(\psi(x)/x)z) \sim
z^{\kappa/\delta} \;, \\
J \sim  k_1\{\psi(x)/x\} \bar H (x) \int_{z^{\kappa/\delta}}^\infty \mathrm{e}^{-t}
t^{(1+\tau)/\kappa-1} \, \mathrm{d} t \; .
\end{gather*}
We next deal with the integral $I$, still in the case $y/x>\rho$.
Noting that $\{y/v(t_1)-x\}/\psi(x) = z_0$, the changes of variables
$s=1/v(t)$ and $s=\{x+\psi(x)\omega)\}/y$ yield
  \begin{align*}
    I & = \int_{1/v(t_1)}^{1/v(t_0-\epsilon)} \bar H(ys)
    \frac{(v^\leftarrow)'(1/s)}{s^2} g(v^\leftarrow(1/s)) \, \mathrm{d} s
    \\
    & = \frac{y \psi(x)}{x^2}
    \int_{z_0}^{\{y/v(t_0-\epsilon)-x\}/\psi(x)} \bar
    H(x+\psi(x)\omega)
    \\
    & \phantom{\frac{y \psi(x)}{x^2} } \times
    \frac{(v^\leftarrow)'((y/x)/\{1+\omega\psi(x)/x\})}{\{1+\omega\psi(x)/x\}^2}
    g(v^\leftarrow((y/x)/\{1+\omega\psi(x)/x\})) \mathrm{d} \omega \; .
  \end{align*}
The choice  $y/x=\rho+h(\psi(x)/x)z$ also yields
\begin{align*}
  \frac{y/x}{1+\omega\psi(x)/x} & = \frac{\rho + h(\psi(x)/x)
    z}{1+\omega \psi(x)/x} \sim \rho + h(\psi(x)/x) z \; .
\end{align*}
Let the function $k_2$ be defined by
\begin{align*}
  k_2(\xi) = \xi (v^\leftarrow)'(\rho+h(\xi))
  g\circ v^\leftarrow(\rho+h(\xi)) \; .
\end{align*}
Assumptions~\ref{hypo:u-v} and~\ref{hypo:T} imply that $k_2$ is
regularly varying at zero with index $(1+\tau)/\kappa
+1-\delta/\kappa>(1+\tau)/\kappa$.  Lemma~\ref{lem:convunifcompact}
yields
\begin{align*}
  I \sim \rho z^{(1+\tau)/\delta-1} k_2(\psi(x)/x) \bar H(x)
  \int_{z^{\kappa/\delta}}^\infty \mathrm{e}^{-t} \, \mathrm{d} t = o(J) \; .
\end{align*}
The case $y/x<\rho$ can be dealt with similarly and is omitted. The
remaining of the proof of assertions (i) and (ii) is straightforward.
\qed

\section{Lemmas}
\label{sec:lemmes}
The following Lemma is a straightforward consequence of the
representation theorem for the class $\Gamma$
\cite[Theorem~3.10.8]{bingham:goldie:teugels:1989}.  The argument was
used in the proof of \citet[Theorem~1]{abdous:fougeres:ghoudi:2005}.
We briefly recall the main lines of the proof for the sake of
completeness.

\begin{lem}  \label{lem:borne-afg}
  Let $H$ be a cdf in the domain of attraction of the Gumbel law
  infinite right endpoint.  For any $p >0$, there exists a constant
  $C$ such that for all $x$ large enough, and all $t\geq0$,
  \begin{gather}     \label{eq:borne-afg}
    \frac{\bar H(x+\psi(x)t)}{\bar H(x)} \leq C (1 + t)^{-p} \; .
    \\
    \frac{\bar H(\alpha x)}{\bar H(x)} \leq C (\psi(x)/x)^p \; .
    \label{eq:reste}
  \end{gather}

\end{lem}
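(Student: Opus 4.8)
The plan is to read both inequalities off the de Haan representation of $\Gamma$-varying functions \cite[Theorem~3.10.8]{bingham:goldie:teugels:1989}. Since $H$ is in the Gumbel domain with infinite right endpoint, $1/\bar H$ is $\Gamma$-varying with auxiliary function $\psi$, so for $x$ larger than some $z_0$ one may write
\begin{gather*}
  \bar H(x) = c(x) \, \exp\left( - \int_{z_0}^x \frac{\mathrm{d} s}{\psi(s)} \right) \; ,
\end{gather*}
where $c(x) \to c \in (0,\infty)$ and $\psi$ can be taken absolutely continuous with $\psi'(s)\to 0$ (any auxiliary function is asymptotically equivalent to such a version, so only the constants are affected). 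For $y \geq x \geq z_0$ this gives
\begin{gather*}
  \frac{\bar H(y)}{\bar H(x)} = \frac{c(y)}{c(x)} \, \exp\left( - \int_x^y \frac{\mathrm{d} s}{\psi(s)} \right) \; .
\end{gather*}
Since $c$ has a positive limit, after enlarging $z_0$ the factor $c(y)/c(x)$ is bounded above by a constant, uniformly for $y \geq x \geq z_0$, so the whole problem reduces to a lower bound on the integral $\int_x^y \mathrm{d} s/\psi(s)$.

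The crux — and the step I expect to be the main obstacle — is to make this lower bound uniform in $t \geq 0$ when $y = x + \psi(x) t$, since for large $t$ the endpoint $x+\psi(x)t$ moves into a region where $\psi$ has grown. A naive estimate such as $\psi(s) \leq \epsilon s$ only yields $\epsilon^{-1}\log(1 + \psi(x)t/x)$, which is too weak because of the vanishing factor $\psi(x)/x$. Instead I would linearize $\psi$: fixing $p > 0$ and $\delta \leq 1/p$, the hypothesis $\psi'(s)\to 0$ gives $z_\delta \geq z_0$ with $\psi'(s) \leq \delta$ for $s \geq z_\delta$, whence $\psi(s) \leq \psi(x) + \delta(s-x)$ for all $s \geq x \geq z_\delta$. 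Integrating the corresponding upper bound for $1/\psi$ and substituting $w=s-x$ produces
\begin{gather*}
  \int_x^{x+\psi(x)t} \frac{\mathrm{d} s}{\psi(s)} \geq \int_0^{\psi(x)t} \frac{\mathrm{d} w}{\psi(x) + \delta w} = \frac1\delta \log(1 + \delta t) \; ,
\end{gather*}
valid for every $t \geq 0$ and every $x \geq z_\delta$. This is exactly the uniformity that the purely local self-neglecting property does not by itself deliver.

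It then remains to compare $\delta^{-1}\log(1+\delta t)$ with $p\log(1+t)$. Since $\delta \leq 1/p$, an elementary estimate shows that $p\log(1+t) - \delta^{-1}\log(1+\delta t)$ stays bounded above by a constant $C_1$ for all $t \geq 0$. Combining this with the bound on the prefactor yields
\begin{gather*}
  \frac{\bar H(x+\psi(x)t)}{\bar H(x)} \leq C \, (1+t)^{-p}
\end{gather*}
for $x$ large and all $t \geq 0$, which is~(\ref{eq:borne-afg}). Finally, (\ref{eq:reste}) follows by specialization: taking $t = (\alpha-1)x/\psi(x)$ gives $x + \psi(x)t = \alpha x$, and because $\psi(x) = o(x)$ one has $1 + t \geq (\alpha-1)x/\psi(x)$, so for $\alpha>1$,
\begin{gather*}
  \frac{\bar H(\alpha x)}{\bar H(x)} \leq C \left( (\alpha-1)\frac{x}{\psi(x)} \right)^{-p} = C(\alpha-1)^{-p} \left( \frac{\psi(x)}{x} \right)^{p} \; ,
\end{gather*}
which is the claimed bound after relabelling the constant.
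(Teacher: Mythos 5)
Your proposal is correct and follows essentially the same route as the paper: both start from the de Haan representation of $\Gamma$-varying tails, bound the ratio of the slowly converging prefactors by a constant, exploit $\psi'\to 0$ to get the linear bound $\psi(s)\leq\psi(x)+\delta(s-x)$ (which is exactly the paper's inequality $\psi(x+\psi(x)t)\leq(1+\epsilon t)\psi(x)$), integrate to obtain the $(1+\delta t)^{-1/\delta}$ decay, and then deduce~(\ref{eq:reste}) by the same substitution $t=(\alpha-1)x/\psi(x)$. The only difference is cosmetic (your change of variable $w=s-x$ versus the paper's $s=x+\psi(x)w$), so there is nothing further to reconcile.
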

\begin{proof}
    If $\gamma = 0$, the function $\bar H$ can be expressed as
\begin{gather*}
  \bar H(x) = c(x) \exp\left\{-\int_{x_0}^x \frac{\mathrm{d} s}{\psi(x)}
  \right\}
\end{gather*}
where $\lim_{x\to\infty} c(x) = c \in(0,\infty)$ and
$\lim_{x\to\infty} \psi'(x) = 0$. Thus, for any $\epsilon>0$ and $x$
large enough, there exists a constant $C$ such that
\begin{gather*}
  \frac{c(x+\psi(x)t)}{c(x)} \leq C \; , \ \ \
  \frac{\psi(x+\psi(x)t)}{\psi(x)} \leq 1 + \epsilon t \; .
\end{gather*}
Hence
\begin{align*}
  \frac{\bar H(x+\psi(x)t)}{\bar H(x)}
  & \leq C \exp\left\{-\int_{0}^{t} \frac{\mathrm{d}
      s}{1+\epsilon s} \right\} = C (1+\epsilon t)^{-1/\epsilon} \; .
\end{align*}
This proves~(\ref{eq:borne-afg}). The bound~(\ref{eq:reste}) follows
trivially from~(\ref{eq:borne-afg}) by choosing $\epsilon<1/p$ and by
setting $t = (\alpha-1)x/\psi(x)$.
\end{proof}

\begin{lem}    \label{lem:convunifcompact}
  Let $H$ be a cdf in the domain of attraction of the Gumbel law with
  infinite right endpoint.  Let $g$ be a function regularly varying at
  zero with index $\tau>-1$ and bounded on compact subsets of
  $(0,\infty]$.  Then
  \begin{gather*}
    \lim_{x\to\infty} \int_z^\infty \frac{\bar H(x+\psi(x)t)}{\bar
      H(x)} \frac{g(t\psi(x)/x)}{g(\psi(x)/x)} \, \mathrm{d} t =
    \int_z^\infty t^\tau \, \mathrm e^{-t} \, \mathrm{d} t \; ,
  \end{gather*}
  locally uniformly with respect to $z\geq0$.
\end{lem}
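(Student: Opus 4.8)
The plan is to recognize the integrand as a product of two factors that each converge by a regular-variation argument, and then to upgrade the resulting pointwise convergence to convergence of the integral, uniformly in $z$, by dominated convergence. Writing
\[
  f_x(t) = \frac{\bar H(x+\psi(x)t)}{\bar H(x)} \,
  \frac{g(t\psi(x)/x)}{g(\psi(x)/x)} \; ,
\]
I would first establish pointwise convergence on $(0,\infty)$. For each fixed $t>0$ the first factor tends to $\mathrm{e}^{-t}$ by the $\Gamma$-varying property~(\ref{eq:rapvar}). Since $\psi(x)=o(x)$, the argument $\psi(x)/x$ tends to $0$, so the regular variation of $g$ at zero with index $\tau$ forces the second factor to tend to $t^\tau$; by the uniform convergence theorem for regularly varying functions this holds uniformly for $t$ in compact subsets of $(0,\infty)$. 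Hence $f_x(t)\to t^\tau \mathrm{e}^{-t}$ pointwise.

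The main work is to exhibit a dominating function, integrable on $[0,\infty)$ and independent of $x$ for all large $x$. For the first factor I would invoke the bound~(\ref{eq:borne-afg}) of Lemma~\ref{lem:borne-afg}, which gives $\bar H(x+\psi(x)t)/\bar H(x) \leq C(1+t)^{-p}$ for any prescribed $p>0$ and all $x$ large enough. For the second factor I would apply Potter's bounds for regular variation (see \citet[Theorem~1.5.6]{bingham:goldie:teugels:1989}): for any $\eta>0$ there is a constant $C'$ with $g(t\psi(x)/x)/g(\psi(x)/x) \leq C'\max(t^{\tau+\eta},t^{\tau-\eta})$ once $\psi(x)/x$ is small enough. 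Choosing $\eta$ small so that $\tau-\eta>-1$ and then $p$ large (say $p>\tau+\eta+2$), the product $CC'(1+t)^{-p}\max(t^{\tau+\eta},t^{\tau-\eta})$ is integrable over $[0,\infty)$: near $0$ it behaves like $t^{\tau-\eta}$, integrable because $\tau-\eta>-1$, while near $\infty$ the polynomial growth is killed by $(1+t)^{-p}$.

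With pointwise convergence and this $x$-uniform integrable bound in hand, dominated convergence gives not only $\int_z^\infty f_x(t)\,\mathrm{d} t \to \int_z^\infty t^\tau \mathrm{e}^{-t}\,\mathrm{d} t$ for each fixed $z$, but in fact $\int_0^\infty |f_x(t)-t^\tau\mathrm{e}^{-t}|\,\mathrm{d} t \to 0$, since $|f_x(t)-t^\tau\mathrm{e}^{-t}|$ is dominated by the sum of the fixed bound and the integrable limit and tends to $0$ pointwise. The uniformity in $z$ is then immediate: for every $z\geq 0$,
\[
  \Bigl| \int_z^\infty f_x(t)\,\mathrm{d} t - \int_z^\infty t^\tau\mathrm{e}^{-t}\,\mathrm{d} t \Bigr|
  \leq \int_0^\infty \bigl|f_x(t)-t^\tau\mathrm{e}^{-t}\bigr|\,\mathrm{d} t \; ,
\]
and the right-hand side does not depend on $z$ and tends to $0$. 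This yields convergence uniformly on all of $[0,\infty)$, in particular locally uniformly as claimed. The only point requiring care is the validity of the $x$-uniform Potter bound, which rests precisely on $\psi(x)/x\to 0$, so that the small-argument regime of $g$ is genuinely reached for all large $x$ simultaneously.
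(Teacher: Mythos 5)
There is a genuine gap in your domination step. Potter's bounds for a function regularly varying at zero give
\[
\frac{g(s')}{g(s)} \leq C' \max\bigl\{(s'/s)^{\tau+\eta},(s'/s)^{\tau-\eta}\bigr\}
\]
only when \emph{both} arguments $s$ and $s'$ lie below some threshold $s_0(\eta)$. You apply this with $s=\psi(x)/x$ and $s'=t\psi(x)/x$ for \emph{every} $t>0$; but on the region $t\geq s_0\,x/\psi(x)$ the argument $t\psi(x)/x$ is not small, and regular variation of $g$ at zero says nothing about $g$ there --- only the hypothesis that $g$ is bounded on compact subsets of $(0,\infty]$ controls that range, and your proof never invokes it (a telltale sign). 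Worse, the bound you claim is genuinely false in general, not merely unjustified. If $\tau\in(-1,-1/2]$, your requirement $\tau-\eta>-1$ forces $\eta<1+\tau\leq-\tau$, hence $\tau+\eta<0$. Take $g(s)=s^\tau$ for $s\leq 1$ and $g(s)=1$ for $s>1$ (continuous, regularly varying at zero with index $\tau$, bounded on compact subsets of $(0,\infty]$), and write $\chi=\psi(x)/x$. For $t=K/\chi$ with $K>1$ the ratio equals $g(K)/g(\chi)=\chi^{-\tau}$, independent of $K$, while $C'\max(t^{\tau+\eta},t^{\tau-\eta})=C'(K/\chi)^{\tau+\eta}\to 0$ as $K\to\infty$; so no constant $C'$ works for all $t$, however small $\chi$ is. Consequently there is no $x$-free integrable dominating function of the form you propose, and the dominated convergence argument does not go through as written.

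The gap is repairable, but the repair essentially reconstructs the paper's proof. The paper does not look for a single dominating function: it uses local uniform convergence of both factors on compact subsets of $(0,\infty)$ and then shows that the two truncated pieces are uniformly negligible, namely $\lim_{A\to\infty}\limsup_{x\to\infty}\int_A^\infty\cdots\,\mathrm{d} t=0$ and $\lim_{\eta\to 0}\limsup_{x\to\infty}\int_0^\eta\cdots\,\mathrm{d} t=0$, by combining the bound~(\ref{eq:borne-afg}) with Karamata's theorem applied to $\int_{A\chi(x)}^\infty t^{-1/\epsilon}g(t)\,\mathrm{d} t$; it is precisely in that step (finiteness and estimation of an integral of $g$ over the whole half-line) that the local boundedness of $g$ on $(0,\infty]$ enters. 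If you want to keep the DCT formulation, you must treat the region $t\geq s_0x/\psi(x)$ separately: there $g(t\chi)\leq M:=\sup_{s\geq s_0}g(s)$, and $M/g(\chi)$ can be absorbed by an RV lower bound $g(\chi)\geq c\,\chi^{\tau+\eta'}$ together with part of the decay from~(\ref{eq:borne-afg}), e.g.\ $(1+t)^{-p}\leq (s_0/\chi)^{-p/2}(1+t)^{-p/2}$ with $p$ chosen large; this produces an integrable, $x$-free bound on that region as well. Your Potter bound is valid and sufficient on the complementary region $t\leq s_0x/\psi(x)$, and your treatment near $t=0$ (domination by $t^{\tau-\eta}$ with $\tau-\eta>-1$) is correct; note also that, once domination is fixed, your $L^1$ argument actually yields uniformity in $z$ on all of $[0,\infty)$, slightly more than the lemma's locally uniform claim.
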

\begin{proof}
  Denote $\chi(x) = \psi(x)/x$; then $\lim_{x\to\infty} \chi(x)=0$.
  By assumption, $\bar H(x+\psi(x)t)/\bar H(x)$ converges to $\mathrm
  e^{-t}$ and $g(\chi(x)t)/g(\chi(x))$ converges to $t^\tau$, and both
  convergences are uniform on compact sets of $(0,\infty)$. It is thus
  sufficient to prove that
  \begin{gather}
    \lim_{A\to\infty} \limsup_{x\to\infty} \int_A^\infty \frac{\bar
      H(x+\psi(x)t)}{\bar H(x)} \frac{g(\chi(x)t)}{g(\chi(x))} \, \mathrm{d} t
    = 0 \; , \label{eq:alinfini}
    \\
    \lim_{\eta\to0} \limsup_{x\to\infty} \int_0^\eta \frac{\bar
      H(x+\psi(x)t)}{\bar H(x)} \frac{g(\chi(x)t)}{g(\chi(x))} \, \mathrm{d} t =
    0 \; . \label{eq:enzero}
  \end{gather}
  Let $\epsilon>0$ be such that $1/\epsilon - 1 > \tau$.  By
  Lemma~\ref{lem:borne-afg}, for large enough $x$,
  \begin{align*}
    \int_A^\infty \frac{\bar H(x+\psi(x)t)}{\bar H(x)} g(\chi(x)t) \,
    \mathrm{d} t &
    \leq C \int_A^\infty t^{-1/\epsilon} g(\chi(x)t) \, \mathrm{d} t \\
    & = C \chi(x)^{1/\epsilon-1}\int_{A \chi(x)}^\infty
    t^{-1/\epsilon} g(t) \, \mathrm{d} t \; .
  \end{align*}
  Since $g$ is locally bounded on $(0,\infty]$ and
  $-1/\epsilon+\tau<-1$, Karamata's Theorem
  (cf. for instance \citet[Proposition~1.5.10]{bingham:goldie:teugels:1989}) implies that
  there exists a constant $C'$ such that
\begin{align*}
  \int_{A \chi(x)}^\infty t^{-1/\epsilon} g(t) \, \mathrm{d} t & \leq C'
  (\chi(x)A)^{1-1/\epsilon}g(\chi(x)A)\; , \\
  \limsup_{x\to\infty} \int_A^\infty \frac{\bar H(x+\psi(x)t)}{\bar
    H(x)} \frac{g(\chi(x)t)}{g(\chi(x))} \, \mathrm{d} t & \leq
  CC'A^{1-1/\epsilon}
  \limsup_{x\to\infty} \frac{g(\chi(x)A)}{g(\chi(x))} \\
  & = CC'A^{1-1/\epsilon+\tau} \to 0
\end{align*}
as $A$ tends to infinity, because $1-1/\epsilon+\tau<0$.  This
proves~(\ref{eq:alinfini}).  Since $\bar H(x+\psi(x)t)/\bar H(x) \leq
1$ and by Karamata's Theorem, we get, for some constant  $C$,
\begin{align*}
  \int_0^\eta \frac{\bar H(x+\psi(x)t)}{\bar H(x)} g(\chi(x)t) \, \mathrm{d} t
  & \leq \int_0^\eta g(\chi(x)t) \, \mathrm{d} t \\
  & = \chi(x)^{-1} \int_{0}^{\chi(x) \eta} g(t) \, \mathrm{d} t \leq C \eta
  g(\chi(x)\eta) \; .
\end{align*}
Hence
\begin{gather*}
\lim_{\eta\to0}  \limsup_{x\to\infty} \int_0^\eta \frac{\bar H(x+\psi(x)t)}{\bar
    H(x)} \frac{g(\chi(x)t)}{g(\chi(x))} \, \mathrm{d} t \leq \lim_{\eta\to0} C
  \eta^{1+\tau} = 0\; ,  
\end{gather*}
because $1+\tau>0$, which proves~(\ref{eq:enzero}).
\end{proof}

\begin{lem}
  Let $\ell$ be a continuous function defined on $[0,\infty)$, bounded above and
  away from zero and with a finite limit at infinity.  Define $L$ on
  $\mathbb{R}_+\times\mathbb{R}_+$ by $L(x,y) = \ell(y/x)$. Then $L$ belongs to
  the class $\mathcal{L}$.
\label{lem:plate}
\end{lem}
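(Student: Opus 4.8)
The plan is to reduce the two-dimensional limit defining the class $\mathcal{L}$ to a one-dimensional continuity statement for $\ell$, after compactifying the ray $[0,\infty)$. Write $\ell_\infty=\lim_{r\to\infty}\ell(r)$, which is finite by hypothesis and satisfies $\ell_\infty\ge m>0$ with $m=\inf\ell$. I would extend $\ell$ to a function $\bar\ell$ on the one-point compactification $[0,\infty]$ by setting $\bar\ell(\infty)=\ell_\infty$; then $\bar\ell$ is continuous on a compact space, hence uniformly continuous with respect to the metric $d(r,r')=|r/(1+r)-r'/(1+r')|$, which metrizes $[0,\infty]$. The whole argument uses only the positive quadrant, which is where $L(x,y)=\ell(y/x)$ is defined and the only region relevant to the application.

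Fix $(x,y)$. For $(\xi,\zeta)$ with $\xi>0$ and $\zeta\ge0$, set $s=\zeta/\xi$ and $s'=(y+\zeta)/(x+\xi)$, so that $L(\xi,\zeta)=\ell(s)$ and $L(x+\xi,y+\zeta)=\ell(s')$. A direct computation gives $s'-s=(y-xs)/(x+\xi)$, whence
\[
 d(s,s')=\frac{|s-s'|}{(1+s)(1+s')}=\frac{|xs-y|}{(x+\xi)(1+s)(1+s')}\le\frac{x}{x+\xi}\,\frac{s}{(1+s)(1+s')}+\frac{y}{(x+\xi)(1+s)(1+s')}\; .
\]

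I would then show $d(s,s')\to0$ as $\|(\xi,\zeta)\|\to\infty$ in the positive quadrant. Given any sequence with $\|(\xi_n,\zeta_n)\|\to\infty$, pass to a subsequence along which either $\xi_n\to\infty$ or $\xi_n$ stays bounded. In the first regime $x/(x+\xi_n)\to0$ and $y/(x+\xi_n)\to0$, while the remaining factors are bounded by $1$, so both terms vanish. In the second regime $\|(\xi_n,\zeta_n)\|\to\infty$ forces $\zeta_n\to\infty$, hence $s_n=\zeta_n/\xi_n\to\infty$; since $s'_n/s_n=(1+y/\zeta_n)/(1+x/\xi_n)\to1$ we also have $s'_n\to\infty$, so the factor $(1+s_n)(1+s'_n)\to\infty$ dominates the numerators and again $d(s_n,s'_n)\to0$. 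As every subsequence admits such a further subsequence, $d(s,s')\to0$ unconditionally.

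Finally, uniform continuity of $\bar\ell$ with respect to $d$ yields $\ell(s')-\ell(s)\to0$, and since $\ell\ge m>0$ this gives $L(x+\xi,y+\zeta)/L(\xi,\zeta)=\ell(s')/\ell(s)=1+(\ell(s')-\ell(s))/\ell(s)\to1$, which is exactly the defining property of $\mathcal{L}$. The only genuinely delicate point is the regime $s\to\infty$, where $s$ and $s'$ leave every compact subset of $[0,\infty)$ and one must invoke the finite limit $\ell_\infty$ rather than ordinary uniform continuity on compacts; the compactification together with the metric $d$ merges the two regimes into a single statement and renders this step routine.
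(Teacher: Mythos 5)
Your overall route is essentially the paper's: both proofs reduce the two-dimensional limit to continuity of $\ell$ on the compactified half-line $[0,\infty]$ (which is what the finite limit at infinity and the bounds on $\ell$ buy you), and both dispose of the degenerate directions by passing to subsequences. You make this explicit through the metric $d$ and uniform continuity, whereas the paper argues by cases according to whether $\lim\xi_n$ and $\lim\zeta_n$ are finite or infinite; your quantitative bound on $d(s,s')$ actually handles more cleanly the case where both $\xi_n,\zeta_n\to\infty$ but $\xi_n/\zeta_n$ oscillates, which the paper deals with via an asymptotic-equivalence shortcut.

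However, one intermediate assertion is false as stated. In your second regime ($\xi_n$ bounded, hence $\zeta_n\to\infty$) you claim $s'_n/s_n=(1+y/\zeta_n)/(1+x/\xi_n)\to1$. This fails when $\xi_n\to0$ and $x>0$: take $\xi_n=1/n$, $\zeta_n=n$, $x=y=1$; then $s_n=n^2$, $s'_n=n$, so $s'_n/s_n\to0$. The conclusion you want from it, namely $s'_n\to\infty$, is nevertheless true and immediate, since $s'_n=(y+\zeta_n)/(x+\xi_n)\ge\zeta_n/(x+\sup_n\xi_n)\to\infty$; so this step should be replaced by the direct estimate. Separately, when $x=0$ and $\xi_n\to0$, the second term of your bound is not killed by ``$(1+s_n)(1+s'_n)\to\infty$'' alone, because the factor $y/(x+\xi_n)=y/\xi_n$ may blow up; you need to pair the factors, using $(x+\xi_n)(1+s_n)\ge\xi_n s_n=\zeta_n$, which gives the bound $y/\zeta_n\to0$. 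With these two local repairs your proof is complete and correct, and matches the paper's in substance.
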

\begin{proof}
  Since $\ell$ is bounded, it suffices to prove that if the limit
  $\lim_{(\xi,\zeta)\to\infty}
  \ell((x+\xi)/(y+\zeta))/\ell(\xi/\zeta)$ exists, then it is equal
  to~1.  Since moreover $\ell$ is continuous and bounded away from
  zero, it is enough to consider subsequences and to show that if
  $\|(\xi_n,\zeta_n)\| \to\infty$ and if $\lim_{n\to\infty} \xi_n$ and
  $\lim_{n\to\infty} \zeta_n$ both exist, then $ \lim_{n\to\infty}
  (x+\xi_n)/(y+\zeta_n) = \lim_{n\to\infty} \xi_n/\zeta_n$. Three
  cases arise.
  \begin{enumerate}[(i)]
  \item If $\lim_{n\to\infty} \xi_n = \lim_{n\to\infty} \zeta_n=
    \infty$, then
   $$
   \frac{x+\xi_n}{y+\zeta_n} = \frac{\xi_n}{\zeta_n} \,
   \frac{1+x/\xi_n}{1+y/\zeta_n} \sim \xi_n/\zeta_n \; .
   $$
 \item If $\lim_{n\to\infty} \xi_n < \infty$ and $\lim_{n\to\infty}
   \zeta_n= \infty$, then
   \begin{gather*}
     \lim_{n\to\infty} (x+\xi_n)/(y+\zeta_n) = 0 = \lim_{n\to\infty}
     \xi_n/\zeta_n \; .
   \end{gather*}
 \item If $\lim_{n\to\infty} \xi_n = \infty$ and $ \lim_{n\to\infty}
   \zeta_n < \infty$, then
   \begin{gather*}
     \lim_{n\to\infty} (x+\xi_n)/(y+\zeta_n) = \infty =
     \lim_{n\to\infty} \xi_n/\zeta_n  \; .
   \end{gather*}
  \end{enumerate}

\end{proof}

\begin{proof}[Proof of~(\ref{eq:equiv-b})] 
  By assumption, $\pr(Y>y) \leq \bar H(y/v^*)$, thus $b_Y(t) \leq
  v^*b(t)$.  Denote $V=v(T)$.  Two cases only are possible: (i) $v^*$ is
  an isolated point of the support of the distribution of $V$ and
  $\pr(V=v^*)>0$; (ii) $\pr(V=v*)=0$ and for any $\epsilon>0$, there
  exists $v\in(v^*-\epsilon,v^*)$ such that $\pr(V>v)>0$.
  \begin{enumerate}[(i)]
  \item If $\pr(V=v^*)>0$ and $v^*$ is an isolated point, then there
    exists $v^{**}<v^*$ such that 
    \begin{gather*}
      \pr(Y>y) = \bar H(y/v^*) \pr(V=v^*) + \pr(RV>y \;, \; V \leq
      v^{**}) \; .
    \end{gather*}
    Note that $\pr(RV>y \;, \; V \leq v^{**}) \leq \bar H(y/v^{**})$,
    and since $\bar H$ is $\Gamma$-varying, $\bar H(y/v^{**})=o(\bar
    H(y/v^*))$, thus $\pr(Y>y) \sim \epsilon\bar H(y/v^*)$ with
    $\epsilon = \pr(V=v^*)$. Since $b$ is slowly varying, this implies
    that $b_Y(t) \sim v^* b(\epsilon t)$ and finally that $b_Y(t) \sim
    v^* b(t)$ as $t\to\infty$.
  \item In the second case, fix some $v<v*$ and denote
    $\epsilon=\pr(V>v)>0$ by assumption. Then 
    \begin{align*}
      \pr(Y>y) & = \pr(RV>y \;, \; V > v) + \pr(RV>y \;, \; V \leq
      v) 
       \geq \epsilon \pr(R>y/v). 
    \end{align*}
    Thus, $b_Y(t) \geq v b(\epsilon t)$ and since $b$ is slowly varying, 
    \begin{gather*}
      \liminf_{t\to\infty} \frac{b_Y(t)}{b(t)} \geq v \; .
    \end{gather*}
    Since $v$ can be chosen arbitrarily close to $v^*$ and since we
    already know that $b_Y(t) \leq v^*b(t)$, we conclude that
    $\lim_{t\to\infty} b_Y(t)/b(t) = v^*$.
  \end{enumerate}

\end{proof}

\paragraph{Acknowledgement} 
We thank the associate editor and the referees for their comments and
suggestions that helped to substantially improve our paper.

\bibliographystyle{plainnat} \bibliography{bibextreme}

\end{document}